\newtheorem{theorem}{Theorem}
\newtheorem{proposition}{Proposition}
\newtheorem{lemma}{Lemma}
\theoremstyle{definition}
\newtheorem{definition}{Definition}
\newcommand{\gen}[1]{\langle #1 \rangle_K}
\newcommand{\comment}[1]{}
\newcommand{\card}[1]{|#1|}
\newcommand{\myset}[1]{\{1,\dots,#1\}}
\newcommand{\id}{\mathbbm{1}}
\newcommand{\summand}[2]{\frac{1}{#1} \id_{#2}}
\newcommand{\term}[1]{\summand{\card{#1}}{#1}}
\title{Minimal nontrivial solutions of the isometry equation}
\author{Serhii Dyshko
	\thanks{Electronic address: \texttt{dyshko@univ-tln.fr}}}
\affil{Institut de math\'ematiques de Toulon, Universit\'e de Toulon, France}
\begin{document}
\maketitle

\begin{abstract}
	In the paper there are described minimal nontrivial solutions of the isometry equation. This equation naturally appears in the coding theory in the study of additive code isometries. The nontrivial minimal solutions correspond to the case of unextendible additive isometries of the shortest code length. Based on this full description, several useful properties of minimal nontrivial solutions were observed.
\end{abstract}

\section{Preliminaries}
\newcommand{\V}{\mathcal{V}}
\newcommand{\U}{\mathcal{U}}
 
Let $K$ be a finite field of size $\card{K}= q$ and let $W$ be a vector space over $K$ of dimension greater than one. Let $m$ be a positive integer and let $V_i, U_i \subseteq W$ be vector spaces, where $i \in \myset{m}$.
 		
		Recall for the pair of sets $X \subseteq Y$ the indicator function $\id_X : Y \rightarrow \{0,1\}$ is defined as $\id_X(x) = 1$ for $x \in X$ and $\id_X(x) =0$ otherwise.
		
The following equation is called an \emph{isometry equation},
		\begin{equation}\label{eq-main-counting-space}
			\sum_{i=1}^m \frac{1}{\card{V_i}} \id_{V_i}=
			\sum_{i=1}^m \frac{1}{\card{U_i}} \id_{U_i}\;.
		\end{equation}
		
Denote $\V = (V_1, \dots, V_m)$, $\U = (U_1,\dots,U_m)$ and call the \emph{tuples of spaces}.
A pair of tuples $(\U, \V)$ is called a \emph{solution} if it satisfies \cref{eq-main-counting-space}.
		
The easiest way to find a solution is to chose any spaces $V_1, \dots, V_m \subseteq W$ and define $U_i = V_{\pi(i)}$, for some permutation $\pi \in S_m$, where $i \in \myset{m}$.
We say that tuples $\V$ and $\U$ are \emph{equivalent} ($\U \sim \V$) if there exists a permutation $\pi \in S_m$, such that $V_i  = U_{\pi(i)}$, for all $i \in \myset{m}$.
Such a solution $(\U,\V)$, where the tuples $\U$ and $\V$ are equivalent, is called \emph{trivial}. Note that the defined equivalence of tuples is really an equivalence relation.

We say that two pairs $(\U,\V)$ and $(\U',\V')$ are equivalent (denote $(\U,\V) \sim (\U',\V')$) if $\U \sim \U'$, $\V \sim \V'$ or $\V \sim \U'$, $\U \sim \V'$. The defined equivalence of pairs is also an equivalence relation on the set of all pairs of tuples of spaces. A pair $(\U,\V)$ is a solution if and only if any equivalent is a solution. Moreover, $(\U,\V)$ is a trivial solution if and only if any equivalent pair is a trivial solution. 
		
In general, not all the solutions are trivial. Denote by $\mathbb{P}_1(K)$ a projective space of dimension one over $K$. Note that $\card{\mathbb{P}_1(K)} = \card{K} + 1 = q + 1$. For $m = q + 1$ there exists an example of a nontrivial solution.
\begin{definition}
	A pair $(\U,\V)$ is called a pair of \emph{Type A}, if there exist a subspace $S \subseteq W$ of dimension $k$ and two different vectors $a,b \in W$, with $S \cap \gen{a,b} = \{0\}$, such that $V_1 = \dots = V_{q} = \gen{S,a,b}$, $V_{q+1} = S$ and $U_i = \gen{S, \alpha_i a + \beta_i b}$, for $i \in \myset{q+1}$, where $[\alpha_i: \beta_i]$ is the $i$th element of $\mathbb{P}_1(K)$.
		\end{definition}
In fact, the spaces $U_1,\dots, U_m$ from a pair of Type A are all different hyperplanes in $\gen{S,a,b}$ that contain the subspace $S$. In \cite{d1} it was proved that a pair of Type A is a nontrivial solution. Indeed, denote $V = \gen{S, a, b}$, then
		
\begin{equation*}
\sum_{i=1}^{q+1} \term{V_i} = q \summand{q^{k+2}}{V} + \summand{q^{k}}{S} = \frac{1}{q^{k+1}} \left( \id_V + q \id_S \right) \;,
\end{equation*}
		
\begin{equation*}
\sum_{i=1}^{q+1} \term{U_i} = \frac{1}{q^{k+1}}\sum_{i=1}^{q+1} \id_{U_i} = \frac{1}{q^{k+1}} \left( \id_V + q \id_S \right)\;.
\end{equation*}
Evidently, a solution of Type A is nontrivial. The inclusion diagram of spaces from a pair of Type A is presented in \Cref{fig-type-a}.
	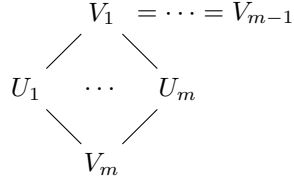
\begin{figure}[!ht]
		\centering
		\begin{tikzpicture}
		\node (top) at (0,2) {$V_1$};
		\node (topr) at (1.5,2) {$ = \dots = V_{m-1}$};
		\node (l) at (-1, 1) {$U_1$};
		\node (c) at (0, 1) {$\dots$};
		\node (r) at (1, 1) {$U_m$};
		\node (down) at (0, 0) {$V_m$};
		\draw (top) -- (l) -- (down) -- (r) -- (top);
		\end{tikzpicture}
		\caption{Solution of Type A}
		\label{fig-type-a}
	\end{figure}

\comment{In this paper we are concerned with the nontrivial solutions. The importance of study of the nontrivial solutions is because it is related to the description of code isometries. In \cite{d1} it was proved that there exists a nontrivial solutions if and only if there exists an unextendible additive isometry of an additive code. There the were also proven some properties of nontrivial solutions and minimum requirement for their existence.
}

To classify all the solutions, up to equivalence, for some $m$, we have to describe all trivial and all nontrivial solutions. For trivial solutions the task is easy --- all such solutions are parametrized by tuples of spaces of length $m$, where the spaces are subspaces of $W$.

The case of nontrivial solutions is more complicated. We introduce several important properties of nontrivial solutions that we are using further.

\begin{lemma}\label{lemma-minimum-covering-number-of-one-space}
	Let $V$ be a nonzero vector space over $K$ and let $U_i \subset V$ be proper subspaces, for $i \in \myset{m}$. If $V = \bigcup_{i =1}^m U_i$, then $m$ is greater than the cardinality of ${K}$.
\end{lemma}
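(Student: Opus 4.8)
The plan is to argue by a direct counting (union-bound) estimate on the \emph{nonzero} vectors of $V$, exploiting two facts: that a proper subspace has index at least $q$ in $V$, and that all the $U_i$ share the zero vector. Write $n = \card{V}$ and $d = \dim V$, so that $n = q^d$ with $d \ge 1$. Since each $U_i$ is a \emph{proper} subspace, $\dim U_i \le d-1$, and hence $\card{U_i} \le q^{d-1} = n/q$. This is the only place properness enters, and it is essential: a bound such as $\card{U_i} < n$ would be too weak.

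The key observation is that the covering $V = \bigcup_{i=1}^m U_i$ restricts to a covering of the nonzero vectors, $V \setminus \{0\} = \bigcup_{i=1}^m (U_i \setminus \{0\})$. Counting nonzero vectors and applying the union bound then gives
\begin{equation*}
n - 1 = \card{V \setminus \{0\}} \le \sum_{i=1}^m \card{U_i \setminus \{0\}} = \sum_{i=1}^m \bigl(\card{U_i} - 1\bigr) \le m\left(\frac{n}{q} - 1\right).
\end{equation*}
Removing the zero vector from each $U_i$ before summing is precisely what neutralizes the otherwise awkward common overlap at the origin, so no inclusion--exclusion beyond this single subtraction is needed.

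To conclude, I would suppose for contradiction that $m \le q$. Because $n/q - 1 \ge 0$, the displayed inequality yields $n - 1 \le q(n/q - 1) = n - q$, i.e. $q \le 1$, contradicting $q = \card{K} \ge 2$. Hence $m > q = \card{K}$, as claimed; equivalently, solving the inequality for $m$ gives $m \ge q(n-1)/(n-q) = (q^d-1)/(q^{d-1}-1) > q$, which recovers the sharp value $m \ge q+1$ realized, for instance, by the hyperplanes through a common subspace appearing in a pair of Type A. I do not anticipate a genuine obstacle here, as the argument is elementary once the counting is set up correctly; the only points demanding care are the use of properness to bound $\card{U_i} \le n/q$ and the restriction to nonzero vectors, after which the finiteness of $K$ (specifically $q \ge 2$) is exactly what forces the contradiction.
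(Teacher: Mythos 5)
Your proof is correct and is essentially the paper's own argument: both rest on the union bound over the covering combined with the properness estimate $\card{U_i} \leq \card{V}/\card{K}$. The only cosmetic difference is that you make the needed strictness explicit by deleting the zero vector from each $U_i$ before counting, whereas the paper achieves the same effect by writing the strict inequality $\card{V} < \sum_{i=1}^m \card{U_i}$ (strict because the $U_i$ all overlap at the origin).
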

\begin{proof}
	For any $i \in \myset{m}$, $\dim_K U_i \leq \dim_K V - 1$ and hence $\card{U_i} \leq \frac{\card{V}}{\card{K}}$. Thus we have
	\begin{equation*}
		\card{V} < \sum_{i =1}^m \card{U_i} \leq m \frac{\card{V}}{\card{K}}
	\end{equation*} that implies $m > \card{K}$.
\end{proof}

\begin{lemma}\label{lemma-minimum-size-of-space-equation}
	Let $U_1,\dots, U_r, V_1, \dots, V_s$ be different vector spaces over $K$. Assume that $a_1,\dots, a_r, b_1,\dots, b_s > 0$ and
	\begin{equation*}
	\sum_{i = 1}^{r} a_i \id_{U_i} = \sum_{i = 1}^{s} b_i \id_{V_i} \,\, .
	\end{equation*}
	Then $\max\{r,s\}$ is greater than the cardinality of ${K}$.
\end{lemma}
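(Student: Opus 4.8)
The plan is to isolate a space of maximal dimension and show that the opposite side of the equation is forced to cover it, so that \cref{lemma-minimum-covering-number-of-one-space} applies. First I would choose, among the $r+s$ spaces, one of maximal dimension, call it $M$; after possibly swapping the two sides and relabelling, I may assume $M = U_1$. (A genuine equation with positive coefficients forces $r,s\geq 1$ and $M \neq \{0\}$, since otherwise one side is identically $0$ or all spaces coincide, contradicting that they are different.) The goal is then to prove that $s > \card{K}$, which already gives $\max\{r,s\} > \card{K}$; the case where the maximal space sits on the right is symmetric and yields $r > \card{K}$.

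Next I would restrict the identity to the set $M$ and evaluate it pointwise there. For $x \in M$ we have $\id_{U_1}(x) = \id_M(x) = 1$, so the identity reads, as functions on $M$,
\[
a_1 \id_M + \sum_{i=2}^{r} a_i \id_{U_i \cap M} = \sum_{j=1}^{s} b_j \id_{V_j \cap M}.
\]
Because all coefficients are positive, the left-hand side is everywhere at least $a_1 > 0$ on $M$. Hence for every $x \in M$ the right-hand side is strictly positive, which means $x$ lies in some $V_j$. This shows $M = \bigcup_{j=1}^{s} (V_j \cap M)$.

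Finally I would check that each $V_j \cap M$ is a proper subspace of $M$, so that the covering lemma applies. Since $M$ has maximal dimension, $\dim_K V_j \leq \dim_K M$; and since the spaces are pairwise different, $V_j \neq M$, so $M \not\subseteq V_j$ and $V_j \cap M \subsetneq M$. Thus $M$ is covered by the $s$ proper subspaces $V_1 \cap M, \dots, V_s \cap M$, and \cref{lemma-minimum-covering-number-of-one-space} gives $s > \card{K}$, completing the proof.

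I expect the only genuinely delicate point to be the reduction to a \emph{one-sided} covering: a first instinct is to pick a point of $M$ avoiding every proper subspace that occurs and read off a contradiction directly, but over a finite field a space is the union of its proper subspaces, so no such point need exist. The right move is instead the observation that on $M$ the left-hand side is bounded below by $a_1 > 0$, which forces the spaces $V_j$ alone (not the $U_i$ and $V_j$ together) to cover $M$; this is what turns a crude bound $r+s > \card{K}$ into the sharp $\max\{r,s\} > \card{K}$. The roles of maximality (to make the intersections proper) and of the distinctness hypothesis (to exclude $V_j = M$) are the two facts that must be used carefully.
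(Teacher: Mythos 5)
Your proof is correct and follows essentially the same route as the paper's: the paper likewise selects a maximal space (maximal under inclusion rather than of maximal dimension, which amounts to the same thing here), observes via positivity of the coefficients that it is covered by its proper intersections with the spaces on the opposite side of the equation, and then applies \Cref{lemma-minimum-covering-number-of-one-space}. Your write-up just makes the positivity and properness steps more explicit than the paper does.
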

\begin{proof}
	Among the spaces $V_1, \dots, V_s, U_1, \dots, U_r$ choose one that is maximal under inclusion. It is either $V_i$ for some $i \in \myset{s}$, or $U_j$ for some $j \in \myset{t}$. In the first case $V_i = \bigcup_{j = 1}^{r} (V_i \cap U_j)$, where for all $j \in \myset{r}$, $V_i \cap U_j \subset V_i$. From Lemma \ref{lemma-minimum-covering-number-of-one-space}, $r > \card{K}$. Similarly, in the second case $s > \card{K}$.
\end{proof}

\begin{proposition}\label{thm-nontrivial-solution-minimal-length}
	There exists a nontrivial solution if and only if $m \geq q +1$.
\end{proposition}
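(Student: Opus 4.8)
The plan is to prove the two implications separately, with the ``only if'' direction (a nontrivial solution forces $m \geq q+1$) being the substantial one, while the ``if'' direction follows from the Type A construction together with a padding argument. For the latter, suppose $m \geq q+1$. The pair of Type A already exhibits a nontrivial solution of length exactly $q+1$. To reach any larger $m$, I would append $m-(q+1)$ extra coordinates carrying the \emph{same} space on both sides, say $U_i = V_i = \{0\}$ for $i > q+1$. These added summands are identical on the two sides, so \cref{eq-main-counting-space} still holds. Moreover the solution stays nontrivial: viewing $\V$ and $\U$ as multisets of spaces (so that $\V \sim \U$ means multiset equality), the padded tuples are obtained from the Type A tuples by adjoining the same multiset to each, and multiset cancellation shows that $\V \sim \U$ would force equivalence of the original Type A tuples, which fails. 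Hence a nontrivial solution exists for every $m \geq q+1$.

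For the hard direction I argue directly: take any nontrivial solution $(\U,\V)$ and show $m > q$. First I group equal spaces. Let the distinct spaces occurring in either tuple be $R_1,\dots,R_t$, and for each such $R$ let $\mu(R) \in \mathbb{Z}$ be its multiplicity in $\V$ minus its multiplicity in $\U$. Since equal spaces have equal cardinalities, \cref{eq-main-counting-space} collapses to $\sum_R \frac{\mu(R)}{\card{R}}\id_R = 0$. The solution is trivial precisely when all $\mu(R)=0$, so nontriviality yields some $R$ with $\mu(R)\neq 0$. Separating the spaces of positive and negative net multiplicity gives
\begin{equation*}
	\sum_{R:\,\mu(R) > 0} \frac{\mu(R)}{\card{R}}\,\id_R = \sum_{R:\,\mu(R) < 0} \frac{-\mu(R)}{\card{R}}\,\id_R,
\end{equation*}
an equation between two families of pairwise distinct spaces with strictly positive coefficients. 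Both families are nonempty: if one side were empty, the other would be a nonnegative combination of indicators that is strictly positive somewhere, which is impossible.

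Now \cref{lemma-minimum-size-of-space-equation} applies to this reduced equation and shows that the larger of the two families has more than $q$ members. Finally I bound each family size by $m$: every space $R$ with $\mu(R)>0$ occurs at least once in $\V$, and distinct such spaces occupy distinct coordinates, so the left family has at most $m$ members; symmetrically the right family has at most $m$ members. Combining, $q < \max(\dots) \leq m$, i.e.\ $m \geq q+1$, as desired.

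The only real obstacle is the bookkeeping in the reduction step. One must pass from \cref{eq-main-counting-space}, where spaces may repeat within a tuple and may be shared between the two sides, to the hypothesis of \cref{lemma-minimum-size-of-space-equation}, which demands genuinely distinct spaces and strictly positive weights. Introducing the multiplicity function $\mu$ and splitting it into its positive and negative supports is exactly what makes this transition clean, and it is the point where nontriviality ($\mu \not\equiv 0$) is converted into the nonemptiness needed to invoke the lemma.
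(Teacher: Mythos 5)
Your proof is correct and follows essentially the same route as the paper: the forward direction reduces \cref{eq-main-counting-space} to the hypotheses of \Cref{lemma-minimum-size-of-space-equation} by cancelling repeated spaces (your multiplicity function $\mu$ is just an explicit rendering of the paper's ``combining and elimination of all equal spaces''), and the converse uses a Type A solution padded with identical extra spaces on both sides. The only difference is that you spell out the bookkeeping (nonemptiness of both supports, the bound by $m$, and the cancellation argument for nontriviality of the padded solution) that the paper leaves implicit.
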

\begin{proof}
	Let $(\U,\V)$ be a nontrivial solution. Simplify \cref{eq-main-counting-space} by combining and elimination of all equal spaces. The resulting equation is in the form of the equation from \Cref{lemma-minimum-size-of-space-equation} and therefore $m > q$.
	
	Conversely, let $(\U,\V)$ be of Type A. If $m = q+ 1$ we have already shoved that $(\U,\V)$ is a nontrivial solution. If $m > q + 1$, let $X_1,\dots, X_{m-q-1}$ be subspaces in $W$. Define tuples $\U',\V'$ by adding all these spaces to both tuples $\U$ and $\V$. The pair $(\U',\V')$ is a nontrivial solution.
\end{proof}

In this paper our objective is the description of all solutions of \cref{eq-main-counting-space} for $m = q + 1$, up to equivalence, and to study their properties. As we mentioned above, this task is reduced to the description of nontrivial solutions.

From \Cref{thm-nontrivial-solution-minimal-length} we see the importance of the coverings of a vector space by proper subspaces. In the case $m = q+ 1$ there is a description of all such coverings. 

\begin{lemma}[see \cite{d1}]\label{lemma-minimum-dense-covering-desc}
	Let $V$ be a nonzero space over $K$. Let $W_i \subset V$, for $i \in \myset{q+1}$, be proper subspaces and $V = \bigcup_{i = 1}^{q+1} W_i$. There exists subspace $S \subset V$ of codimension $2$ such that $\{W_1, \dots, W_m\}$ is the set of all hyperplanes in $V$ that contain $S$. The equality holds,
	\begin{equation}\label{eq-dense-covering}
	\sum_{i=1}^m \id_{W_i} = \id_V + q \id_S \;.
	\end{equation}
\end{lemma}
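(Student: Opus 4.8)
The plan is to establish the two assertions in order: first that every $W_i$ is a hyperplane, then that they all contain a common subspace $S$ of codimension $2$, after which \cref{eq-dense-covering} drops out by a pointwise evaluation. To get a hyperplane into the picture, I would first argue that not every $W_i$ can have codimension $\ge 2$: in that case $\card{W_i} \le \card{V}/q^2$, so $\sum_i \card{W_i} \le (q+1)\card{V}/q^2 < \card{V}$ (using $q+1 < q^2$), contradicting $V = \bigcup_i W_i$. So, relabelling, $H := W_{q+1}$ is a hyperplane. The bootstrapping step is to restrict to $V \setminus H$, a set of size $(q-1)\card{H}$ that must be covered by $W_1,\dots,W_q$. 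Any $W_i \subseteq H$ contributes nothing there, while each $W_i \not\subseteq H$ meets $H$ in a hyperplane of $W_i$, so $\card{W_i \setminus H} = \card{W_i}(q-1)/q \le \card{H}(q-1)/q$; summing over the at most $q$ relevant indices bounds the total coverage by $(q-1)\card{H}$. Since this must be attained, equality is forced throughout: no $W_i$ lies in $H$, each $W_i$ is a hyperplane, and $W_1,\dots,W_q$ partition $V \setminus H$. Distinctness of the $W_i$ then follows from \Cref{lemma-minimum-covering-number-of-one-space}, since fewer than $q+1$ distinct proper subspaces cannot cover $V$.

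The crux, and the step I expect to be the main obstacle, is extracting a single common codimension-$2$ subspace rather than merely pairwise intersections. The observation that unlocks it is that the distinguished hyperplane $H$ was arbitrary: running the partition argument with $W_k$ in the role of $H$ shows that, for any distinct $i,j,k$, the hyperplanes $W_i$ and $W_j$ are disjoint off $W_k$, hence $W_i \cap W_j \subseteq W_k$. Fixing $i=1$, $j=2$ and letting $k$ range gives $W_1 \cap W_2 \subseteq \bigcap_{\ell} W_\ell =: S$, while trivially $S \subseteq W_1 \cap W_2$; thus $S = W_1 \cap W_2$ has codimension $2$ and lies in every $W_i$. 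Passing to the quotient $V/S \cong K^2$, the hyperplanes of $V$ containing $S$ correspond to the $q+1$ lines of $V/S$, so the $q+1$ distinct hyperplanes $W_i$ must be exactly all of them.

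Finally I would verify \cref{eq-dense-covering} by evaluating both sides at a vector $v$. If $v \in S$ then $v$ lies in all $q+1$ hyperplanes, giving $q+1$ on the left and $\id_V(v) + q\,\id_S(v) = 1 + q$ on the right. If $v \notin S$ then its image in $V/S$ is nonzero and lies on exactly one of the $q+1$ lines, so $v$ lies in exactly one $W_i$, matching $\id_V(v) + q\,\id_S(v) = 1$ on the right. This last check is routine once the structure from the previous paragraph is in hand.
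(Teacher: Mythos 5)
Your proof is correct, and it is considerably more complete than what the paper itself offers: the paper's proof of \Cref{lemma-minimum-dense-covering-desc} is a two-sentence sketch that cites the minimum covering number (\Cref{lemma-minimum-covering-number-of-one-space}) and an unspecified ``sieve theorem'' (inclusion--exclusion on the sizes in the covering), deferring the actual work to the reference \cite{d1}. Your route replaces the sieve entirely with a tight counting argument: you first locate one hyperplane $H$ among the $W_i$ by a crude size bound, then observe that covering $V \setminus H$ (of size $(q-1)\card{H}$) by the remaining $q$ subspaces, each contributing at most $(q-1)\card{H}/q$ points, forces equality everywhere --- so every $W_i$ is a hyperplane and the sets $W_i \setminus H$ partition $V \setminus H$. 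The genuinely nice step, which the paper's sketch does not hint at, is exploiting the symmetry of the distinguished hyperplane: rerunning the partition argument with each $W_k$ in the role of $H$ yields $W_i \cap W_j \subseteq W_k$ for all distinct $i,j,k$, so $S := W_1 \cap W_2$ is the common codimension-$2$ subspace, and the quotient $V/S \cong K^2$ identifies the $q+1$ distinct $W_i$ with the $q+1$ lines of $K^2$. The pointwise verification of \cref{eq-dense-covering} is then immediate, exactly as you say (you may also note the trivial case of a point outside $V$, where both sides vanish, if the indicators are viewed as functions on the ambient space). In short: what the paper buys with its sketch is brevity at the cost of self-containedness; your argument is longer but elementary, complete, and could stand in the paper as an actual proof.
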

\begin{proof}
	The proof is based on the fact that a finite $K$-linear space can be covered by at least $q+1$ proper subspaces. Using the sieve theorem for the size of the covering, it is easy to prove that the only possible covering by the minimum number of the proper subspaces is the one presented in the statement. \Cref{eq-dense-covering} evidently follows.
\end{proof}

\section{Properties of minimal nontrivial solutions}
	In \cite{d1} we completely observed the solutions with different maximum dimensions of spaces in two tuples, $\max_{1 \leq i \leq m} \dim_K V_i \neq \max_{1 \leq i \leq m} \dim_K U_i$.
	
	\begin{proposition}[see \cite{d1}]\label{thm-type-a}
		Let $(\U,\V)$ be a nontrivial solution and  $$\max_{1 \leq i \leq q+1} \dim_K V_i > \max_{1 \leq i \leq q+1} \dim_K U_i\;.$$Then $(\U,\V)$ is equivalent to a solution of Type A.
	\end{proposition}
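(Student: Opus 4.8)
The plan is to first pin down the tuple $\U$ from the maximal space in $\V$, and then to read off $\V$ from the resulting explicit form of the right-hand side of \cref{eq-main-counting-space}.

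First I would record the elementary fact that, since all coefficients are positive, the supports of the two sides of \cref{eq-main-counting-space} coincide, so $\bigcup_i V_i = \bigcup_i U_i$. Let $V$ be a space of maximal dimension $D$ among $V_1,\dots,V_{q+1}$. By hypothesis every $U_i$ satisfies $\dim_K U_i < D$, so each $V \cap U_i$ is a proper subspace of $V$; and since $V \subseteq \bigcup_i V_i = \bigcup_i U_i$, these proper subspaces cover $V$, that is $V = \bigcup_{i=1}^{q+1}(V \cap U_i)$. As there are only $q+1$ of them, \Cref{lemma-minimum-covering-number-of-one-space} forces them to be pairwise distinct, and then \Cref{lemma-minimum-dense-covering-desc} identifies them with the set of all $q+1$ hyperplanes of $V$ through some codimension-$2$ subspace $S \subset V$. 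In particular each $V \cap U_i$ has dimension $D-1$, which together with $\dim_K U_i < D$ gives $\dim_K U_i = D-1$ and hence $U_i = V \cap U_i \subseteq V$. Thus $U_1,\dots,U_{q+1}$ are exactly the $q+1$ hyperplanes of $V$ containing $S$, and by \cref{eq-dense-covering} the right-hand side of \cref{eq-main-counting-space} equals $\tfrac{1}{q^{D-1}}\id_V + \tfrac{1}{q^{D-2}}\id_S$.

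Next I would determine $\V$. Comparing supports again shows every $V_i \subseteq V$. Let $t$ be the number of indices with $V_i = V$; note $t \geq 1$. The main step is to show $t = q$. Consider the family $P$ consisting of $S$ together with those $V_i$ that are proper in $V$. I claim $P$ does not cover $V$: otherwise \Cref{lemma-minimum-covering-number-of-one-space} would force $P$ to contain at least $q+1$ distinct proper subspaces, which, since $P$ has at most $q+2-t$ distinct members, forces $t=1$ and makes $P$ a covering by exactly $q+1$ distinct proper subspaces; \Cref{lemma-minimum-dense-covering-desc} would then make every member of $P$ a hyperplane, contradicting $\dim_K S = D-2$. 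Hence some $x_0 \in V$ lies outside $S$ and outside every proper $V_i$, and evaluating \cref{eq-main-counting-space} at $x_0$ gives $t/q^{D} = 1/q^{D-1}$, so $t = q$.

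Finally, with $t = q$ there is a single remaining space $V_{q+1}$, and cancelling the $q$ copies of $V$ on the left-hand side (which contribute $\tfrac{1}{q^{D-1}}\id_V$) leaves $\tfrac{1}{|V_{q+1}|}\id_{V_{q+1}} = \tfrac{1}{q^{D-2}}\id_S$, forcing $V_{q+1} = S$. Thus $\V$ consists of $q$ copies of $V$ and one copy of $S$, while $\U$ runs over all hyperplanes of $V$ through $S$; choosing $a,b \in V$ whose images span $V/S$ exhibits $(\U,\V)$ as a pair of Type A with $k = D-2$. I expect the only delicate point to be the determination of $t$: the case analysis must avoid circularity, and the covering \Cref{lemma-minimum-covering-number-of-one-space,lemma-minimum-dense-covering-desc} are exactly what rule out the degenerate possibility $t=1$ and simultaneously guarantee the point $x_0$ needed for the evaluation.
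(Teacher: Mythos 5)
Your proof is correct and takes essentially the same approach as the paper's: the key step in both is that the maximal space $V$ is covered by its intersections with $U_1,\dots,U_{q+1}$, so that \Cref{lemma-minimum-covering-number-of-one-space} and \Cref{lemma-minimum-dense-covering-desc} identify the $U_i$ with the $q+1$ hyperplanes of $V$ through a codimension-$2$ subspace $S$. The paper defers the remaining determination of the tuple $(V_1,\dots,V_{q+1})$ to the cited reference, and your second covering argument (ruling out that $S$ and the proper $V_i$ cover $V$) together with evaluation at a point $x_0$ fills in that part correctly, yielding $t=q$ copies of $V$ and $V_{q+1}=S$ as required for Type A.
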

	\begin{proof}
		The proof is based on \Cref{lemma-minimum-dense-covering-desc}. Due to the different maximum dimensions, without a loss of generality, the space $V_1$ has dimension greater than one and is covered by the spaces $U_1,\dots, U_{q+1}$.
	\end{proof}

	\emph{From now in this section we suppose that $(\V, \U)$ is a nontrivial solution, $m = q+1$, $\max_{1 \leq i \leq m} \dim_K V_i = \max_{1 \leq i \leq m} \dim_K U_i = n > 1$ and the maximum is achieved on the spaces $V_1$ and $U_1$}.
	
	\begin{lemma}\label{lemma-properties}
	For all $i,j \in \myset{m}$ the following hold,
		\begin{itemize}
			\item[(a)] $U_i \neq V_j$,
			
			\item[(b)] $V_i \subseteq V_j$ implies $i = j$,
			
			\item[(c)] $\dim_K V_i \geq n-1$, $\dim_K U_j \geq n - 1$,
			
			\item[(d)] if $\dim_K U_j > \dim_K V_i$, then $V_i \subset U_j$,
			
			\item[(e)] if $\dim_K U_j = n$, then there exists a subspace $S \subset U_j$ of dimension $n - 2$ such that $S \subset V_i$ for all $i \in \myset{m}$.
		\end{itemize}
	\end{lemma}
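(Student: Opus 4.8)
The plan is to derive all five items from property (a) together with a single structural observation: every space of the maximal dimension $n$ sitting in one tuple is covered pointwise by the spaces of the other tuple, so that the rigidity of minimal coverings in \Cref{lemma-minimum-dense-covering-desc} applies. I would prove (a) first. Suppose some $U_i$ equals some $V_j$. Viewing $\V$ and $\U$ as multisets of spaces, I cancel from \cref{eq-main-counting-space} every space common to both sides; writing $c\ge 1$ for the number of cancelled spaces (with multiplicity), the reduced identity has exactly the shape of \Cref{lemma-minimum-size-of-space-equation}, with $r$ distinct spaces on the left and $s$ on the right, where $r,s\le q+1-c$. Since the solution is nontrivial the two multisets differ, so the reduced identity is not the trivial $0=0$; as an identity of indicators of distinct spaces it cannot have one side empty and the other nonempty, hence $r,s\ge 1$. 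Then \Cref{lemma-minimum-size-of-space-equation} forces $q<\max\{r,s\}\le q+1-c$, giving $c=0$, which contradicts $U_i=V_j$.

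The heart of the argument is the following covering claim. Let $T$ be any of the $2m$ spaces with $\dim_K T=n$; by the $U\leftrightarrow V$ symmetry of \cref{eq-main-counting-space} assume $T=U_1$. Evaluating \cref{eq-main-counting-space} at an arbitrary $x\in U_1$, the right-hand side is at least $1/\card{U_1}>0$ because of the term $i=1$, so the left-hand side is positive and $x$ lies in some $V_k$; thus $U_1=\bigcup_{i=1}^{q+1}(U_1\cap V_i)$. Each $U_1\cap V_i$ is a \emph{proper} subspace of $U_1$, for otherwise $U_1\subseteq V_i$, and as $\dim_K V_i\le n$ this would give $V_i=U_1$, contradicting (a). Applying \Cref{lemma-minimum-dense-covering-desc} to this covering of $U_1$ by the $q+1$ proper subspaces $U_1\cap V_i$ (here $n\ge 2$, so $U_1$ is nonzero and has codimension-$2$ subspaces) yields a subspace $S\subseteq U_1$ of dimension $n-2$ such that $\{U_1\cap V_i : i\in\myset{q+1}\}$ is exactly the set of the $q+1$ hyperplanes of $U_1$ containing $S$. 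In particular every $U_1\cap V_i$ is a hyperplane of $U_1$, so $\dim_K(U_1\cap V_i)=n-1$, and the map $i\mapsto U_1\cap V_i$ is a bijection onto these hyperplanes.

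With this structure in hand the remaining items are short. For (c), the covering claim gives $\dim_K V_i\ge\dim_K(U_1\cap V_i)=n-1$ for all $i$, and the symmetric claim applied to $V_1$ gives $\dim_K U_j\ge n-1$. For (e), when $\dim_K U_j=n$ the covering claim applied to $T=U_j$ produces the desired $(n-2)$-dimensional $S\subseteq U_j$, which lies in every hyperplane $U_j\cap V_i$ and hence in every $V_i$. For (d), if $\dim_K U_j>\dim_K V_i$ then by (c) necessarily $\dim_K U_j=n$ and $\dim_K V_i=n-1$; the covering claim for $U_j$ makes $U_j\cap V_i$ a hyperplane of dimension $n-1$, and since $U_j\cap V_i\subseteq V_i$ with equal dimension we get $V_i=U_j\cap V_i\subseteq U_j$, the inclusion being proper because the dimensions differ. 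For (b), if $V_i\subseteq V_j$ with $i\ne j$, intersecting with $U_1$ gives $U_1\cap V_i\subseteq U_1\cap V_j$; both are hyperplanes of $U_1$, so the inclusion is an equality, contradicting the injectivity of $i\mapsto U_1\cap V_i$.

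The main obstacle is the covering claim itself: one must verify that a maximal-dimensional space is genuinely covered pointwise by the opposite tuple (the positivity argument) and then invoke \Cref{lemma-minimum-dense-covering-desc} to pin the intersections down as precisely the hyperplanes through a fixed codimension-$2$ subspace. The two points to watch are that property (a) is what keeps these intersections proper, and that the symmetry of \cref{eq-main-counting-space} lets each statement and its $U\leftrightarrow V$ mirror be handled simultaneously; once the bijection $i\mapsto U_1\cap V_i$ is available, items (b)--(e) reduce to elementary dimension comparisons.
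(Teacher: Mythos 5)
Your proof is correct and follows essentially the same route as the paper: part (a) by cancelling coincident spaces and appealing to \Cref{lemma-minimum-size-of-space-equation} (the paper cancels the single offending pair and cites \Cref{thm-nontrivial-solution-minimal-length}, whose proof is that same cancellation argument), and parts (b)--(e) by covering a maximal-dimensional space with its intersections against the opposite tuple and invoking \Cref{lemma-minimum-dense-covering-desc} to identify those intersections as the hyperplanes through a common codimension-$2$ subspace. The only differences are presentational: you spell out the pointwise-positivity justification of the covering and the bijectivity of $i \mapsto U_1 \cap V_i$, which the paper leaves implicit.
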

	\begin{proof}
		Assume that (a) does not hold.
		Without loss of generality, we can assume that $U_m = V_m$. Reduce \cref{eq-main-counting-space} to $\sum_{i = 1}^{m-1} \frac{1}{\card{V_i}} \id_{V_i} = \sum_{i=1}^{m-1} \frac{1}{\card{U_i}} \id_{U_i}$.
		The pair $\big( (V_1,\dots,V_{m-1}), (U_1,\dots,U_{m-1}) \big)$ is therefore a nontrivial solution of the new equation and, by \Cref{thm-nontrivial-solution-minimal-length}, $m - 1 \geq q +1$, which contradicts to the fact that $m = q+1$.

		Let $l \in \myset{m}$ be such that $\dim_K U_l = n$. Assume that for some $i,j \in \myset{m}$, $V_i \subseteq V_j$. Then $V_i \cap U_l \subseteq V_j \cap U_l$.
		Since $U_l$ has the largest dimension among all the spaces, according to (a), for all $t \in \myset{m}$, $V_t \cap U_l \neq U_l$.
		Also $U_l = \bigcup_{t = 1}^{m} V_t \cap U_l$. 
		From \Cref{lemma-minimum-dense-covering-desc}, since $m = q+1$, the spaces $V_t \cap U_l$, for $t \in \myset{m}$, are all different hyperplanes in $U_l$. Thus $V_i \cap U_l \subseteq V_j \cap U_l$ implies $i = j$ and $\dim_K V_i \cap U_l = n - 1$, for $i,j \in \myset{m}$. Therefore we have (b) and (c). If $\dim_K V_t = n - 1$, then $V_t = V_t \cap U_l \subset U_l$, which proves (d). Also, from \Cref{lemma-minimum-dense-covering-desc} there exists a space $S \subset U_l$ of dimension $n-2$, such that $S \subset V_t \cap U_l \subseteq V_t$, for all $t \in \myset{m}$. This proves (e).
	\end{proof}

	\begin{lemma}\label{lemma-common-subspace}
		There exists a space $S$ of dimension $n-2$ such that for all $i \in \myset{m}$, $S \subset U_i, V_i$.
	\end{lemma}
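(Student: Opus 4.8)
The plan is to obtain the common subspace directly from property~(e) of \Cref{lemma-properties} together with a pointwise maximality argument, so that no comparison of hyperplane pencils is needed. First I would use that the maximum dimension $n$ is attained on $U_1$: applying property~(e) to $U_1$ yields a subspace $S \subset U_1$ with $\dim_K S = n-2$ and $S \subseteq V_i$ for all $i \in \myset{m}$. It then remains only to promote the inclusion $S \subseteq V_i$ (valid for every $i$) to the inclusion $S \subseteq U_i$ (for every $i$); once this is achieved, $S$ has dimension $n-2$ and lies in every $U_i$ and every $V_i$, which is exactly the claim.

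The engine for this promotion is the remark that $x = 0$ lies in every subspace, so evaluating \cref{eq-main-counting-space} at $0$ gives the identity $\sum_{i=1}^m \frac{1}{\card{V_i}} = \sum_{i=1}^m \frac{1}{\card{U_i}}$. I would then fix an arbitrary $x \in S$. Since $S \subseteq V_i$ for all $i$, the point $x$ lies in every $V_i$, so the left-hand side of \cref{eq-main-counting-space} evaluated at $x$ equals the full sum $\sum_{i=1}^m \frac{1}{\card{V_i}}$. On the right-hand side, $x$ contributes only through those $U_i$ containing it, so its value is $\sum_{i : x \in U_i} \frac{1}{\card{U_i}} \leq \sum_{i=1}^m \frac{1}{\card{U_i}}$. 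Combining the displayed identity with the equality of the two sides forced by \cref{eq-main-counting-space}, this inequality must be an equality, which is possible only if $x \in U_i$ for every $i$. As $x \in S$ was arbitrary, $S \subseteq U_i$ for all $i$, and the proof is complete.

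The one real temptation, and the main pitfall I would avoid, is the symmetric route: property~(e) applied to $V_1$ produces a second codimension-two space $S' \subset V_1$ with $S' \subseteq U_i$ for all $i$, and one is tempted to prove $S = S'$. Both $S$ and $S'$ sit inside the hyperplane $U_1 \cap V_1$ of dimension $n-1$, but identifying them appears to require comparing the two pencils of hyperplanes they cut out in $V_1$, which is genuinely awkward and does not obviously terminate. The maximality argument above sidesteps this difficulty entirely, since it never refers to $S'$; the whole point is that a vector lying in \emph{all} of the $V_i$ saturates the left-hand side, and the balancing identity at $0$ then forces it to lie in all of the $U_i$ as well.
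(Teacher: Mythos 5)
Your proof is correct, and the promotion step is genuinely different from, and more elementary than, the paper's. The paper begins exactly as you do: property (e) of \Cref{lemma-properties} applied to $U_1$ produces $S \subset U_1$ of dimension $n-2$ with $S \subset V_i$ for all $i \in \myset{m}$; it also avoids the pitfall you describe, never introducing a second space $S'$ from $V_1$. But its promotion mechanism is heavier: it restricts \cref{eq-main-counting-space} to $S$, rewrites the result as $\left(\sum_{i = 1}^{m} \frac{1}{\card{V_i}} - \frac{1}{\card{U_1}}\right) \id_{S} = \sum_{i = 2}^m \frac{1}{\card{U_i}} \id_{U_i \cap S}$, uses the evaluation at $0$ only to see that the left coefficient is positive, and then invokes \Cref{lemma-minimum-size-of-space-equation}: since the right side has only $m-1 = q$ terms, some $U_i \cap S$ must equal $S$, that term is moved to the left, and the argument is iterated until $S \subset U_i$ holds for every $i$. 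Your saturation argument replaces the appeal to \Cref{lemma-minimum-size-of-space-equation} and the elimination loop by pure positivity: for $x \in S$ the left side of \cref{eq-main-counting-space} attains its maximal possible value $\sum_{i=1}^m \frac{1}{\card{V_i}}$, so the identity at $0$ forces $\sum_{i :\, x \notin U_i} \frac{1}{\card{U_i}} = 0$, hence $x \in U_i$ for all $i$. This is shorter, needs no reordering of indices, and in fact establishes the cleaner general fact $\bigcap_{i=1}^m V_i \subseteq \bigcap_{i=1}^m U_i$ (hence, by symmetry, equality of the two intersections), from which the lemma follows since $S$ lies in the left-hand intersection. What the paper's route buys is uniformity with its own toolkit, since \Cref{lemma-minimum-size-of-space-equation} is its workhorse elsewhere; for this particular lemma your argument is strictly simpler. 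One cosmetic point: the statement asserts the proper inclusions $S \subset U_i$, but this is automatic in both proofs, because $\dim_K S = n-2 < n-1 \leq \dim_K U_i$ by property (c) of \Cref{lemma-properties}.
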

	\begin{proof}
		From \Cref{lemma-properties} (e), there exists a subspace $S \subset U_1$ of dimension $n-2$ such that for all $i \in \myset{m}$, $S \subset V_i$. Restrict both sides of  \cref{eq-main-counting-space} on the space $S$. In result we get,
		\begin{equation*}
		\sum_{i = 1}^{m} \frac{1}{\card{V_i}} \id_{S} = \frac{1}{\card{U_1}}\id_{S} + \sum_{i = 2}^m \frac{1}{\card{U_i}} \id_{U_i \cap S}\; \iff
		\end{equation*}
		\begin{equation}\label{t3e2}
		\iff \left(\sum_{i = 1}^{m} \frac{1}{\card{V_i}} - \frac{1}{\card{U_1}}\right) \id_{S} = \sum_{i = 2}^m \frac{1}{\card{U_i}} \id_{U_i \cap S}\;.
		\end{equation}
		Calculating \cref{eq-main-counting-space} in zero we get the equality, $\sum_{i = 1}^{m} \frac{1}{\card{V_i}} = \sum_{i = 1}^{m} \frac{1}{\card{U_i}}$.
		Thus the coefficient on the left side of \cref{t3e2} is positive.
		On the right side of \cref{t3e2} there are $m-1$ terms and therefore, by \Cref{lemma-minimum-size-of-space-equation}, there exists $i \in \{2,\dots, m\}$, without loss of generality assume $i = 2$, such that $S \subset U_2$. Continuing the procedure of elimination for all $i \in \{3,\dots,m\}$, we get $S \subset U_i$, for all $i \in \myset{m}$.
	\end{proof}

	\Cref{lemma-properties} (c) states that in a nontrivial solution the only possible dimensions of spaces are $n-1$ and $n$. Denote $X = \{ i \in \myset{m} \mid \dim_K V_i = n - 1 \}$ and $Y = \{ i \in \myset{m} \mid \dim_K U_i = n - 1 \}$.
	
\begin{lemma}\label{lemma-two-types}
	The cardinalities of $X$ and $Y$ are equal and they are not greater than one.
\end{lemma}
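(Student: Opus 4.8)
The plan is to treat the two assertions separately: first the equality $\card X = \card Y$, then the bound $\card X \le 1$.

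For the equality I would evaluate both sides of \cref{eq-main-counting-space} at the zero vector, which (as already observed in the proof of \Cref{lemma-common-subspace}) yields $\sum_{i=1}^m \frac{1}{\card{V_i}} = \sum_{i=1}^m \frac{1}{\card{U_i}}$. By \Cref{lemma-properties}~(c) every space has dimension $n$ or $n-1$, so exactly $\card X$ of the $V_i$ have cardinality $q^{n-1}$ and the other $m-\card X$ have cardinality $q^n$, and similarly for the $U_i$ with $\card Y$. Multiplying the identity by $q^n$ turns it into $(m-\card X)+q\card X=(m-\card Y)+q\card Y$, i.e. $(q-1)\card X=(q-1)\card Y$; since $q\ge 2$ this forces $\card X=\card Y$. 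Write $a$ for this common value.

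For the bound I would argue by contradiction, assuming $a\ge 2$. Call a space of dimension $n$ a \emph{plane} and one of dimension $n-1$ a \emph{line}; by \Cref{lemma-common-subspace} each contains the fixed subspace $S$. Two inputs drive the argument. First, \Cref{lemma-properties}~(d), being established for an arbitrary plane, shows that every line $V_i$ lies inside \emph{every} plane $U_j$. Second, \Cref{lemma-minimum-dense-covering-desc} applied to $U_1$ shows that the traces $V_t\cap U_1$, $t\in\myset m$, are the $q+1$ distinct hyperplanes of $U_1$ through $S$; in particular the $a$ lines among the $V_i$ are pairwise distinct. The configuration is symmetric under exchanging $\U$ and $\V$, so every line $U_i$ lies in every plane $V_j$, and the traces $U_t\cap V_1$ are the $q+1$ hyperplanes of $V_1$ through $S$.

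Now the main step. If $a\ge 2$, then at least two distinct lines $V_t$ lie in the intersection of all planes $U_j$. These lines are distinct hyperplanes of $U_1$ and therefore span $U_1$; as both lie in the intersection of all planes $U_j$, that intersection equals $U_1$, so every plane $U_j$ equals $U_1$. By symmetry every plane $V_j$ equals $V_1$. Feeding this back into the distinctness of the traces $V_t\cap U_1$ gives the bound: every plane-index $t$ now yields the single trace $V_1\cap U_1$, so there is at most one such index, whence $m-a=1$ and $a=q$. Thus the $\V$ side consists of the single plane $V_1$ together with $q$ distinct lines inside $U_1$, and symmetrically for the $\U$ side. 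I expect this collapse of all planes on each side to one plane to be the crux; once it is in place the finish is a short evaluation.

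To finish, multiply \cref{eq-main-counting-space} by $q^n$ to obtain $\id_{V_1}+q\sum_{t\in X}\id_{V_t}=\id_{U_1}+q\sum_{t\in Y}\id_{U_t}$, and evaluate at a vector $x\in V_1\setminus U_1$, which exists since $V_1\ne U_1$ by \Cref{lemma-properties}~(a); as $S\subseteq U_1$, also $x\notin S$. The left-hand side equals $1$, because $x\in V_1$ while every line $V_t\subset U_1$ misses $x$. On the right $x\notin U_1$, and by the dense covering of $V_1$ the point $x\in V_1\setminus S$ lies on exactly one hyperplane of $V_1$ through $S$; by the mirror trace description these hyperplanes are $V_1\cap U_1$ together with the lines $U_t$, and the one through $x$ cannot be $V_1\cap U_1$ (else $x\in U_1$), so it is some $U_t$, giving right-hand side $q$. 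The equality $1=q$ contradicts $q\ge 2$, so $a\le 1$, completing the proof.
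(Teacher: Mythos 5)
Your proof is correct, and its skeleton matches the paper's: evaluate \cref{eq-main-counting-space} at zero to get $\card{X}=\card{Y}$, then show that $\card{X}\ge 2$ forces $\card{X}=m-1$, and finally rule out $\card{X}=m-1$. The two intermediate steps are executed differently, though. For the collapse step the paper argues by cardinality: two distinct lines $V_i,V_j$ give $\card{V_i\cup V_j}=2q^{n-1}-q^{n-2}$, which cannot fit inside $\bigcap_{t\notin Y}U_t$ once at least two distinct planes occur on the $\U$-side (so that $\card{\bigcap_{t\notin Y}U_t}\le q^{n-1}$); you instead observe that the two lines are distinct hyperplanes of $U_1$, hence span it, so every plane $U_j$ contains $U_1$ and equals it --- a more structural version of the same obstruction, which you then combine with distinctness (via the traces, or equivalently the $\U$-analogue of \Cref{lemma-properties}~(b), available by symmetry) to conclude $\card{X}=m-1$. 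For excluding $\card{X}=m-1$, the paper restricts the equation to $U_1$, substitutes the covering identity of \Cref{lemma-minimum-dense-covering-desc}, and reaches the false identity $\sum_{i=2}^m\id_{V_i}=q\id_S$; you instead evaluate the equation (times $q^n$) at a single point $x\in V_1\setminus U_1$ and get $1=q$. Your finish is leaner --- one well-chosen point rather than a functional identity on $U_1$ --- at the price of invoking the trace description of $V_1$ (that $\{U_t\cap V_1\}_{t\in\myset{m}}$ is exactly the set of hyperplanes of $V_1$ through $S$), which is the same fact the paper's restriction argument encodes. Both routes rest on the same three inputs, \Cref{lemma-properties}, \Cref{lemma-common-subspace} and \Cref{lemma-minimum-dense-covering-desc}, so the difference is one of execution rather than strategy, and each variant is complete.
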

\begin{proof}
	Verify that $\card{X} = \card{Y}$. Calculate \cref{eq-main-counting-space} in the point $\{0\}$. Since all the spaces contain zero, we have $\sum_{i = 1}^{m} \frac{1}{\card{V_i}} = \sum_{i = 1}^{m} \frac{1}{\card{U_i}}$ or, the same, $\card{X} \frac{1}{q^{n-1}} + (m - \card{X}) \frac{1}{q^n} = \card{Y} \frac{1}{q^{n-1}} + (m - \card{Y}) \frac{1}{q^n}$. Hence $\card{X} = \card{Y}$.
		
	From \Cref{lemma-properties} (d), we have the inclusions,
	\begin{equation}\label{eq-xy-cap-cup}
	\bigcup_{i \in X} V_i \subseteq \bigcap_{i \notin Y} U_i \text{ and } \bigcup_{i \in Y} U_i \subseteq \bigcap_{i \notin X} V_i \;.
	\end{equation}
	Prove that $\card{X} > 1$ implies $\card{Y} \geq m - 1$. By the contradiction, assume that $\card{Y} < m - 1$. Inequality $\card{X}>1$ implies that there exist $i \neq j \in \myset{m}$ such that $\dim_K V_i = \dim_K V_j = n - 1$. By \Cref{lemma-properties} (b), $V_i \neq V_j$ and, by \Cref{lemma-properties} (e), $\dim_K V_i \cap V_j = n-2$. From \cref{eq-xy-cap-cup}, $V_i \cup V_j \subseteq \bigcap_{t \notin Y} U_t$ and, using the fact that $\card{ \myset{m} \setminus Y } \geq 2$, we have,
	\begin{equation*}
	2q^{n-1} - q^{n-2} = \card{V_i} + \card{V_j} - \card{V_i \cap V_j} = \card{V_i \cup V_j} \leq \card{\bigcap_{t \notin Y} U_t} \leq q^{n - 1}\;.
	\end{equation*}
	This inequality does not hold and hence, by contradiction, $\card{Y} \geq  m - 1$.
		
	The general assumption in the section is $\dim_KV_1 = n$, hence $1 \notin X$ and$\card{X} \leq m - 1$. Combining it with the result above, $\card{X} > 1$ implies $\card{Y} = \card{X} = m - 1$. 
	Prove that $\card{X} = m - 1$ is impossible. Assume that $\card{X} = m - 1$. This means that $\dim_K V_1 = \dim_K U_1 = n$ and $\dim_K U_i = \dim_K V_i = n -1$, for $i \in \{2,\dots,m\}$. Using \Cref{lemma-properties}, it is easy to see that $U_1 \cap V_i = V_i$ and $U_1 \cap U_i = S$, for $i \in \{2,\dots,m\}$, where $S \subset V_1$ is a space from \Cref{lemma-properties} (e) with $\dim_K S = n - 2$ and for all $i \in \myset{m}$, $S \subset U_i$.
	Calculate the restrictions of \cref{eq-main-counting-space} on $U_1$,
	\begin{equation}\label{teq}
		\frac{1}{q^n}\id_{U_1} + \frac{1}{q^{n-1}}\sum_{i=2}^m \id_{S} = \frac{1}{q^n} \id_{V_1 \cap U_1} + \frac{1}{q^{n-1}}\sum_{i = 2}^{m} \id_{V_i}\,\,,
	\end{equation}
	 \Cref{teq} implies $U_1 = (V_1 \cap U_1) \cup \bigcup_{i=2}^m V_i$, where the spaces $V_1 \cap U_1$, $V_2, \dots, V_m$ do not equal $U_1$. From \Cref{lemma-minimum-dense-covering-desc}, $\id_{U_1} + q \id_S = \sum_{i=2}^m \id_{V_i} + \id_{V_1 \cap U_1}$ on $U_1$. Substituting it to \cref{teq}, we get $\sum_{i=2}^m \id_{V_i} = q \id_S$ that is not true. By the contradiction, $\card{X} < m - 1$.
		
	In result, there are two possibilities, $\card{X} = \card{Y} = 0$ and $\card{X} = \card{Y} = 1$.
\end{proof}

So, there exist at most two possible dimension vectors for nontrivial solutions, with $\card{X} = \card{Y} = 1$ and $\card{X} = \card{Y} = 0$.
	
Let $S$ be the space with $\dim_K S = n - 2$ such that for all $i \in \myset{m}$, $S \subset U_i, V_i$. Let $Z_{ij}$ denote the space $U_i \cap V_j$ for $i, j \in \myset{m}$. Without loss of generality, assume that if $\card{X} = \card{Y} = 1$, then $\dim_K V_m = \dim_K U_m = n - 1$.
	
	\begin{lemma}\label{lemma-zij-properties}
		For all $i, j \in \myset{m}$ the following statements hold,
		\begin{itemize}
			\item [(a)]	if $i \neq j$, then $\dim_K Z_{ij} = n -1$,
			\item [(b)] if $\dim_K V_j = n$, then
			\begin{equation}\label{teq0}
			\sum_{i = 1}^m \id_{Z_{ij}} = \id_{V_j} + q\id_{S}\;,
			\end{equation}
			\item [(c)] for all $k,l \in \myset{m}$, such that $i \neq k$ or $j \neq l$, $Z_{ij} \cap Z_{kl} = S$.
		\end{itemize}
	\end{lemma}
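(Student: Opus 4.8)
The plan is to obtain all three parts from the dense‑covering description \Cref{lemma-minimum-dense-covering-desc}, applied to every space of maximal dimension $n$. The structural fact I would isolate first is the one already used inside the proof of \Cref{lemma-properties}: whenever a space $T$ among the $U_i,V_i$ has $\dim_K T = n$, its intersections with the $m=q+1$ spaces of the opposite tuple cover $T$ and are proper in $T$ (an inclusion $T\subseteq U_i$ or $T\subseteq V_i$ would force equality by maximality of $\dim_K T$, contradicting \Cref{lemma-properties}(a)); hence by \Cref{lemma-minimum-dense-covering-desc} they are precisely the $q+1$ distinct hyperplanes of $T$ through a fixed codimension‑two subspace, and by \Cref{lemma-common-subspace} that subspace is forced to be $S$, since $\dim_K S = n-2$ and $S$ lies in each of them. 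I would then reuse this ``hyperplane picture'' throughout.

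For part (a) I would first note that for $i\neq j$ at least one of $U_i,V_j$ has dimension $n$: by \Cref{lemma-two-types} and the normalisation preceding the lemma, the only index that can carry dimension $n-1$ in either tuple is $m$, so $\dim_K U_i=\dim_K V_j=n-1$ would force $i=j=m$. If $\dim_K U_i=n$, then $Z_{ij}=U_i\cap V_j$ is one of the covering hyperplanes of $U_i$, so $\dim_K Z_{ij}=n-1$; the case $\dim_K V_j=n$ is symmetric. For part (b), with $\dim_K V_j=n$ I would apply the picture to $T=V_j$: the $m$ proper subspaces $Z_{ij}=U_i\cap V_j$ cover $V_j$, so \Cref{lemma-minimum-dense-covering-desc} gives a codimension‑two $S'\subset V_j$ with $\sum_i \id_{Z_{ij}}=\id_{V_j}+q\,\id_{S'}$; then $S=S'$ because $S\subseteq\bigcap_i Z_{ij}=S'$ and both have dimension $n-2$, which is exactly \cref{teq0}.

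For part (c) the inclusion $S\subseteq Z_{ij}\cap Z_{kl}$ is immediate from \Cref{lemma-common-subspace}. For the reverse inclusion the natural reduction is the statement ``two \emph{distinct} hyperplanes through $S$ inside a common dimension‑$n$ space meet exactly in $S$''. If $i=k$ with $\dim_K U_i=n$, then $Z_{ij}$ and $Z_{il}$ are two distinct covering hyperplanes of $U_i$ and meet in $S$; symmetrically if $j=l$ with $\dim_K V_j=n$. When $i\neq k$ and $j\neq l$ I would sandwich $Z_{ij}\cap Z_{kl}\subseteq (U_i\cap V_j)\cap(U_i\cap V_l)=Z_{ij}\cap Z_{il}$ and apply the $U_i$‑covering (if $\dim_K U_i=n$) to conclude $Z_{ij}\cap Z_{il}=S$, hence $Z_{ij}\cap Z_{kl}=S$.

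The step I expect to be the main obstacle is exactly the degenerate configuration in which the ambient space one would like to use has dimension $n-1$, namely the case $\card{X}=\card{Y}=1$, where the distinguished index is $m$. There \Cref{lemma-properties}(d) gives $V_m\subset U_i$ for every $i\neq m$, so $Z_{im}=U_i\cap V_m=V_m$ independently of $i$; the two factors $Z_{ij},Z_{kl}$ then need not be distinct hyperplanes, and the reduction to ``two distinct hyperplanes in a dimension‑$n$ space'' collapses. I therefore expect the crux of the proof of (c) to be isolating this case and treating the coincidences $Z_{im}=V_m$ separately, so that the hyperplane‑intersection argument is applied only to pairs $(i,j),(k,l)$ that genuinely yield two different hyperplanes.
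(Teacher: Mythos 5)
Your proofs of parts (a) and (b) are correct and essentially identical to the paper's: both rest on the observation that a space of maximal dimension $n$ is covered by its $m=q+1$ proper intersections with the spaces of the opposite tuple, so that \Cref{lemma-minimum-dense-covering-desc} applies and yields $q+1$ distinct hyperplanes through a codimension-two subspace, which is then identified with $S$ by a dimension count. (The paper treats the exceptional index $m$ in the $\card{X}=\card{Y}=1$ case via \Cref{lemma-properties}(d), giving $Z_{im}=V_m$, rather than via the covering of $U_i$, but the two routes are interchangeable.)

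For part (c), however, the configuration you flag at the end is not a step that a finer argument could repair: it is a counterexample to the statement as written. Take a solution of Type B (a nontrivial solution with $\card{X}=\card{Y}=1$ by \Cref{thm-typeB-detailed-description}) and indices $i\neq k$ with $i,k\neq m$ and $j=l=m$. Then \Cref{lemma-properties}(d) gives $Z_{im}=U_i\cap V_m=V_m=U_k\cap V_m=Z_{km}$, hence $Z_{im}\cap Z_{km}=V_m\neq S$, although the hypothesis ``$i\neq k$ or $j\neq l$'' holds; symmetrically $Z_{mj}\cap Z_{ml}=U_m$ for $j\neq l$, $j,l\neq m$. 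The paper's own proof breaks at exactly this spot: the assertion that $\dim_K(U_i\cap U_k)=\dim_K(V_j\cap V_l)=n-1$ forces all of $U_i,U_k,V_j,V_l$ to have dimension $n$ relies on both intersections being proper, which requires $i\neq k$ \emph{and} $j\neq l$, not the stated disjunction. The correct assertion is that $Z_{ij}\cap Z_{kl}=S$ whenever $Z_{ij}\neq Z_{kl}$, i.e., excluding precisely the coincidences $Z_{im}=Z_{km}=V_m$ and $Z_{mj}=Z_{ml}=U_m$; your case analysis (including the sandwich $Z_{ij}\cap Z_{kl}\subseteq Z_{ij}\cap Z_{il}$, with the obvious switch to $U_k$, $V_j$ or $V_l$ when $\dim_K U_i=n-1$) proves exactly this true version. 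The later applications of (c) in \Cref{thm-typeB-detailed-description} and \Cref{thm-typeC-detailed-description} fall within these true cases, so the classification theorems are unaffected, but the lemma and its proof in the paper need the restriction you identified; you should state it as a hypothesis rather than hope to remove it.
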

	\begin{proof}
		Note that, for all $j \in \myset{m}$, we have $\bigcup_{i = 1}^m Z_{ij} = V_j$. Also, by \Cref{lemma-properties} (a) and (c), $\dim_K Z_{ij} = \dim_K U_i \cap V_j \leq n - 1$. If $\dim_K V_j = n$, then $Z_{ij}\subset V_j$ and, by \Cref{lemma-minimum-dense-covering-desc}, the spaces $Z_{ij}$ for $i \in \myset{m}$ form a covering of $V_j$ by hyperplanes that intersect in $S$. Therefore, for all $j \in \myset{m}$, such that $\dim_K V_j = n$, for all $i \in \myset{m}$, $\dim_K Z_{ij} = n -1$ and \cref{teq0} holds. If $j = m$, $\dim_K V_j = n - 1$ and $i \neq m$ then, by \Cref{lemma-properties} (d), $Z_{ij} = Z_{ji} = V_m$ and $\dim_K Z_{ij} = \dim_K Z_{ji} = \dim_K V_m = n - 1$.
		
		Consider again the equality $\bigcup_{i = 1}^m Z_{ij} = V_j$, for all $j \in \myset{m}$, and calculate $Z_{ij} \cap Z_{kl} = U_i \cap V_j \cap U_k \cap V_l = U_i \cap U_k \cap V_j \cap V_l$. By \Cref{lemma-properties}, $n -2 \leq \dim_K U_i \cap U_j, \dim_K V_j \cap V_l \leq n - 1$. If one of these two spaces $U_i \cap U_j$ or $V_j \cap V_l$ has dimension $n - 2$, then it is equal to $S$ and therefore $Z_{ij} \cap Z_{kl} = S$. Consider the case $\dim_K U_i \cap U_j = \dim_K V_k \cap V_l = n - 1$. This implies $\dim_K U_i = \dim_K U_j= \dim_K V_k =\dim_K V_l = n$. In this case, if $i \neq k$, $S \subseteq Z_{ij} \cap Z_{kl} \subseteq U_i \cap U_k \cap V_j = U_i \cap V_j \cap U_k \cap V_j = Z_{ij} \cap Z_{kj} = S$, since $Z_{ij}$ and $Z_{kj}$ are different hyperplanes in $V_j$. The same holds if $j \neq l$.
	\end{proof}

\section{Detailed description of minimal nontrivial solutions}
	
In this section we keep all the assumptions and notations from the previous section.

	\begin{proposition}\label{thm-factor-equation}
		Let $W$ be a $K$-space, $V_1, \dots, V_m, U_1, \dots, U_m\subseteq W$ be $K$-spaces that have a common subspace $S$. The equality $\sum_{i=1}^m \frac{1}{\card{V_i}} \id_{V_i} = \sum_{i=1}^m \frac{1}{\card{U_i}} \id_{U_i}$ of the functions on $W$ is equivalent to the equality 
		\begin{equation}\label{eq-factor-main-by-common}
		\sum_{i=1}^m \frac{1}{\card{V_i/S}} \id_{V_i/S} = \sum_{i=1}^m \frac{1}{\card{U_i/S}} \id_{U_i/S}
		\end{equation} of the functions on $W/S$.
	\end{proposition}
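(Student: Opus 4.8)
The plan is to work with the canonical quotient map $\pi : W \to W/S$ and to show that each summand of the equation on $W$ is nothing but the pullback along $\pi$ of the corresponding summand on $W/S$, scaled by the single nonzero constant $1/\card{S}$. Concretely, I would prove that $\frac{1}{\card{V_i}}\id_{V_i} = \frac{1}{\card{S}}\cdot\frac{1}{\card{V_i/S}}\,(\id_{V_i/S}\circ\pi)$, and likewise for each $U_i$, after which the two equations differ only by composition with $\pi$ and an overall scalar.

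First I would record the key periodicity identity. Since $S \subseteq V_i$, the space $V_i$ is a union of cosets of $S$, so for every $x \in W$ one has $x \in V_i$ if and only if $x + S \in V_i/S$ (equivalently, $\pi^{-1}(V_i/S) = V_i$ because the preimage of the image of a subspace containing the kernel is that subspace itself). This yields $\id_{V_i} = \id_{V_i/S}\circ\pi$, and the same argument applied to each $U_i$ gives $\id_{U_i} = \id_{U_i/S}\circ\pi$. Combining this with the counting identity $\card{V_i} = \card{S}\,\card{V_i/S}$ (and $\card{U_i} = \card{S}\,\card{U_i/S}$) produces the scaled-pullback form of each term.

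Next I would substitute these expressions into \cref{eq-main-counting-space}. Writing $F$ and $G$ for the two functions on $W/S$ that appear on the two sides of \cref{eq-factor-main-by-common}, the equation on $W$ becomes exactly $\frac{1}{\card{S}}(F\circ\pi) = \frac{1}{\card{S}}(G\circ\pi)$, i.e.\ $F\circ\pi = G\circ\pi$ after cancelling the positive factor $1/\card{S}$. Finally I would invoke surjectivity of $\pi$: a function on $W/S$ is completely determined by its pullback along a surjection, so $F\circ\pi = G\circ\pi$ holds if and only if $F = G$. This settles both implications simultaneously --- composing with $\pi$ turns \cref{eq-factor-main-by-common} into the equation on $W$, while evaluating at a preimage of each point of $W/S$ recovers \cref{eq-factor-main-by-common}.

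There is no substantial obstacle in this argument; it is essentially bookkeeping once the right map is fixed. The only step requiring genuine care is the verification of $\id_{V_i} = \id_{V_i/S}\circ\pi$, which relies squarely on the hypothesis $S \subseteq V_i$: without it the quotient $V_i/S$ would not even be defined, and $\id_{V_i}$ would in general fail to be constant on the cosets of $S$, so the reduction to $W/S$ would break down.
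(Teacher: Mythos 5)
Your proof is correct and follows essentially the same route as the paper's: both rest on the pullback identity $\id_{V_i} = \id_{V_i/S}\circ\pi$ (valid because $S \subseteq V_i$), the counting identity $\card{V_i} = \card{S}\,\card{V_i/S}$, and the surjectivity of the canonical projection to transfer the equality between $W$ and $W/S$. The only difference is cosmetic bookkeeping --- you absorb the factor $1/\card{S}$ into each term before substituting, whereas the paper first proves the equivalence with the original coefficients and rescales at the end.
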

	\begin{proof}
		Let $S, V \subseteq W$ be spaces such that $S \subseteq V$. Let $\pi_S: W \rightarrow W/S$, $x \mapsto \bar{x}$ be a canonical projection, where $\bar{x} = x + S$. The equality $\id_V(x) = \id_{V/S}(\bar{x})$ holds. Indeed, $x \in V$ implies $\bar{x} \in V/S$, and conversely, $\bar{x} \in V/S$ implies that there exists $x' \in V$ such that $\bar{x} = \bar{x'}$, which is equivalent to the fact that $x - x' \in S$ and thus $x \in V$.
		
		Let $W_1, \dots, W_r \subseteq W$ be spaces with a common subspace $S$.
		Consider the function $F: W \rightarrow \mathbb{R}$, $a \mapsto \sum_{i=1}^r x_i \id_{W_i}(a)$, where $x_i \in \mathbb{R}$ for $i \in \myset{r}$. Let $\bar{F} = \sum_{i=1}^r x_i \id_{W_i/S}: W/S \rightarrow \mathbb{R}$. For each $x \in W$ the equality $F(x) = \bar{F}(\pi_S(x))$ holds, $\bar{F}(\pi_S(x)) = \sum_{i=1}^r x_i \id_{W_i/S}(\bar{x}) = \sum_{i=1}^r x_i \id_{W_i}(x) = F(x)$.
		Since $\pi_S$ is a projection, the identity $F \equiv 0$ on $W$ is equivalent to the identity $\bar{F} \equiv 0$ on $W/S$.
		
		Using the arguments above, the equation $\sum_{i=1}^m \frac{1}{\card{V_i}} \id_{V_i} = \sum_{i=1}^m \frac{1}{\card{U_i}} \id_{U_i}$ of the functions on $W$ is equivalent to the equation $\sum_{i=1}^m \frac{1}{\card{V_i}} \id_{V_i/S} = \sum_{i=1}^m \frac{1}{\card{U_i}} \id_{U_i/S}$ of the functions on $W/S$. Since $\card{V_i/S} = \card{V_i}/\card{S}$ and $\card{U_i/S} = \card{U_i}/\card{S}$, for all $i \in \myset{m}$, it is the same as $\card{S}\sum_{i=1}^m \frac{1}{\card{V_i/S}} \id_{V_i/S} = \card{S}\sum_{i=1}^m \frac{1}{\card{U_i/S}} \id_{U_i/S}$. The set $S$ contains zero, so $\card{S} > 0$ and we divide both sides of the equality by $S$, obtaining the necessary equality.
	\end{proof}
	
	Since the spaces in the nontrivial solution $(\U,\V)$ have a common subspace $S$ of dimension $n - 2$, we can factorize all the spaces by $S$ and describe nontrivial solutions in the case $S = \{0\}$. In such a way we can simplify sometimes the proof without loss of properties of nontrivial solutions. Therefore, we can assume that $n = 2$. We will use this assumption when we need it.
\comment{
We say that two solutions $(\V,\U)$ and $(\V',\U')$ are equivalent if $\V \sim \V'$ and $\U \sim \U'$ or $\V \sim \U'$ and $\U \sim \V'$. To simplify the formulation of the following proposition, let $\V/S$ denote the tuple $(V_1/S, \dots, V_m/S)$.

		\begin{lemma}\label{lemma-factor-solution-iff-original}
			Two solutions $(\U, \V)$ and $(\U', \V')$ are equivalent if and only if the corresponding two solutions of  \cref{eq-factor-main-by-common} $(\U/S, \V/S)$ and $(\U'/S, \V'/S)$ are equivalent.
		\end{lemma}
		\begin{proof}
			Obviously, if $S$ is a common subspace of $V_1, V_2$ in $W$, then $V_1 = V_2$ in $W$ if and only if $V_1/S = V_2/S$ in $W/S$.
		\end{proof}
}
		\begin{definition}
			Call a pair of tuples $(\U,\V)$ to be of Type B, if there exists a subspace $S \subset W$ and linearly independent vectors $a,b,c \in W$, where $S \cap \gen{a,b,c} = \{0\}$, such that $V_m = \gen{S, a}$, $V_i = \gen{S, b, \alpha_i a + c}$, $U_m = \gen{S, b}$ and $U_i = \gen{S, a, \alpha_i b + c}$, where $\alpha_i$ is the $i$th element in the field $K$, for $i \in \myset{q}$.
		\end{definition}

	In \Cref{fig-typeB} there is presented the inclusion diagram of spaces in a pair of Type B along with intersections.
	\begin{figure}[!ht]
		\centering
		\begin{tikzpicture}
		\node (topU1) at (-5,2) {$U_1$};
		\node (udotsl) at (-4,2) {$\dots$};
		\node (udots) at (-3,2) {$U_i$};
		\node (udotsr) at (-2,2) {$\dots$};
		\node (topUq) at (-1,2) {$U_{m-1}$};
		\node (midVm) at (-3,1) {$V_m$};
		\node (S) at (0,0) {$S$};
		\node (topV1) at (1,2) {$V_1$};
		\node (vdotsl) at (4,2) {$\dots$};
		\node (vdots) at (3,2) {$V_j$};
		\node (vdotsr) at (2,2) {$\dots$};
		\node (topVq) at (5,2) {$V_{m-1}$};
		\node (midUm) at (3,1) {$U_m$};
		\node (zij) at (0,1) {$Z_{ij}$};
		\draw  (topU1) -- (midVm) -- (S) -- (midVm) -- (topUq);
		\draw  (topV1) -- (midUm) -- (S) -- (midUm) -- (topVq);
		\draw (udots) -- (midVm);
		\draw (vdots) -- (midUm);
		\draw (udots) -- (zij) -- (S);
		\draw (vdots) -- (zij) -- (S);
		\end{tikzpicture}
		\caption{A pair of Type B}
		\label{fig-typeB}
	\end{figure}
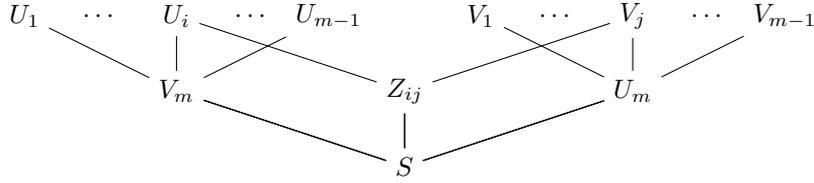

\begin{proposition}\label{thm-typeB-detailed-description}
	A pair $(\U,\V)$ of Type B is a nontrivial solution with $\card{X} = \card{Y} = 1$. If a pair $(\U,\V)$ is a nontrivial solution with $\card{X} = \card{Y} = 1$, then $(\U,\V)$ is equivalent to a solution of Type B.
\end{proposition}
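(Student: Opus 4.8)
The plan is to prove the two implications separately, in both cases first passing to the quotient by the common subspace $S$ via \Cref{thm-factor-equation}, so that one may assume $n = 2$ and $S = \{0\}$. Then the spaces of dimension $n$ become planes, those of dimension $n-1$ become lines, and the whole configuration lives inside a projective plane, which is what makes the combinatorics transparent. For the first implication I would verify directly that a Type B pair solves \cref{eq-main-counting-space}. After factoring by $S$ the vectors $a,b,c$ stay linearly independent, $V_m = \gen{a}$ and $U_m = \gen{b}$ become lines while all other $V_i, U_i$ become planes inside $\gen{a,b,c}$, so at once $\card{X} = \card{Y} = 1$. To check the functional identity I would partition $\gen{a,b,c}$ according to the coordinate $x_c$ of $x = x_a a + x_b b + x_c c$: on $\{x_c \neq 0\}$ each side counts a single plane, since $x$ lies in exactly one $V_i$ (the one with $\alpha_i = x_a/x_c$) and in exactly one $U_i$ (that with $\alpha_i = x_b/x_c$); on $\gen{a}$, on $\gen{b}$, on $\gen{a,b}$ away from these two lines, and at the origin one evaluates both sides directly; outside $\gen{a,b,c}$ both sides vanish. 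Every region gives equal values, and the solution is nontrivial because $V_m = \gen{a}$ occurs among the $V_i$ but equals no $U_j$.

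For the converse, assume $(\U,\V)$ is a nontrivial solution with $\card{X} = \card{Y} = 1$; by the standing convention $X = Y = \{m\}$, and after factoring $\dim_K V_m = \dim_K U_m = 1$ and $\dim_K V_i = \dim_K U_i = 2$ for $i \neq m$. Write $V_m = \gen{a}$, $U_m = \gen{b}$; these are distinct by \Cref{lemma-properties}(a), so $a,b$ are independent. \Cref{lemma-properties}(d) gives $\gen{a}\subset U_i$ for every $i\neq m$, and its mirror (valid since \cref{eq-main-counting-space} is symmetric in $\U,\V$) gives $\gen{b}\subset V_i$; while \Cref{lemma-properties}(b) forbids $a\in V_i$ and $b\in U_i$. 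The crucial step is to show all spaces lie in one $3$-dimensional space $T$. I would take $z$ spanning $Z_{12} = U_1\cap V_2$, which has dimension $1$ by \Cref{lemma-zij-properties}(b), set $T = \gen{a,b,z}$, and check $\dim_K T = 3$ using $a\notin V_2\supseteq Z_{12}$. Then $U_1 = \gen{a,z}$ and $V_2 = \gen{b,z}$; for each $i\neq m$ the line $Z_{i2} = U_i\cap V_2$ (dimension $1$ by \Cref{lemma-zij-properties}(b) applied to $V_2$) lies in $V_2\subset T$ and differs from $\gen{a}$, so $U_i = \gen{a}+Z_{i2}\subseteq T$; symmetrically $Z_{1j} = U_1\cap V_j\subset U_1\subset T$ forces $V_j\subseteq T$ for $j\neq m$. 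Hence every space sits in $T$.

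Working in the projective plane $\mathbb{P}(T)$, the lines $V_i$ ($i\neq m$) are $q$ distinct lines through the point $\gen{b}$, none through $\gen{a}$, hence exactly the $q$ lines of the pencil through $\gen{b}$ other than $\gen{a,b}$; symmetrically the $U_i$ are the $q$ lines of the pencil through $\gen{a}$ other than $\gen{a,b}$. I would then choose any $c\in T\setminus\gen{a,b}$, so that $a,b,c$ are independent, and observe that the associated Type B tuples satisfy $\{\gen{b,\alpha a + c}:\alpha\in K\} = (\text{all lines through }\gen{b}\text{ except }\gen{a,b})$, and likewise on the $U$-side, because $\alpha\mapsto\gen{b,\alpha a + c}$ is a bijection from $K$ onto that pencil (it is the perspectivity from $\gen{b}$ of the points of $\gen{a,c}$ other than $\gen{a}$). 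Thus the families of $V$'s and of $U$'s of our solution coincide, as sets, with those of a Type B pair, and $V_m,U_m$ match as well, so $(\U,\V)$ is equivalent to a Type B solution after lifting back through $S$.

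I expect the main obstacle to be the planarity step, namely proving that the whole configuration is contained in a single $3$-dimensional space; the rest is either a direct evaluation (first implication) or a transparent count of lines in a pencil once planarity is secured. A secondary point I would state explicitly is that the index-by-index pairing built into the definition of Type B is immaterial here: since equivalence of pairs only compares the tuples as sets, it suffices to match the unordered families of $V$'s and of $U$'s, so the common parameter $\alpha_i$ shared by $V_i$ and $U_i$ imposes no further constraint.
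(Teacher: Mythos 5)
Your proposal is correct. It shares the paper's skeleton --- reduce to $S=\{0\}$ and $n=2$ via \Cref{lemma-common-subspace} and \Cref{thm-factor-equation}, verify Type B directly, then reconstruct the configuration for the converse --- but both halves are carried out differently, and in each case your route has something to recommend it. For the direct half, the paper restricts \cref{eq-main-counting-space} to each plane $V_j$ and invokes the covering identity $\sum_{i=1}^q \id_{Z_{ij}} + \id_{U_m} = \id_{V_j}+q\id_{S}$, citing \Cref{lemma-zij-properties}(b); your pointwise partition of $\gen{a,b,c}$ by the $c$-coordinate is more elementary and self-contained, and it quietly sidesteps an awkwardness in the paper: \Cref{lemma-zij-properties} is proved under the standing hypothesis that the pair is already a nontrivial solution, which is the very fact being verified at that point (the paper's step can be repaired by citing \Cref{lemma-minimum-dense-covering-desc} directly, since the $Z_{ij}$ together with $U_m$ are $q+1$ distinct lines covering $V_j$). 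For the converse, both proofs obtain planarity by essentially the same mechanism (each $V_j$ is $\gen{b}$ plus a line of $U_1$, and dually), but the endgames differ: the paper coordinatizes, writing $v_i=\alpha_i a+\beta_i c$ and $u_i=\gamma_i b+\delta_i c$, computes every intersection $Z_{ij}$, and forces the ratios $\alpha_i/\beta_i$, $\gamma_i/\delta_i$ to exhaust $K$ from the distinctness of the $Z_{ij}$ (\Cref{lemma-zij-properties}(c)); you instead count --- $q$ distinct planes through $\gen{b}$ avoiding $\gen{a}$ must be the entire pencil minus $\gen{a,b}$, and the Type B family built from an arbitrary $c\in T\setminus\gen{a,b}$ is exactly that pencil --- so no intersection computation is needed at all. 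Your closing remark that the index pairing in the definition of Type B is immaterial is also correct and worth making explicit (the paper only hints at it with ``up to an order of spaces in tuples''), since equivalence of pairs permits independent permutations of the two tuples. Two small points to tighten in a write-up: the dimension of $Z_{12}$ comes most directly from \Cref{lemma-zij-properties}(a); and $\dim_K T = 3$ also requires $b\notin\gen{z}$, which follows from $U_m\not\subseteq U_1$ (the $\mathcal{U}$-side mirror of \Cref{lemma-properties}(b)), not only from $a\notin V_2$.
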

\begin{proof}
To simplify both parts of the proof, according to \Cref{lemma-common-subspace} and \Cref{thm-factor-equation}, we can assume $S = \{0\}$ and $n = 2$.

Prove the first part. Calculate the intersection of the spaces, $Z_{ij} = U_i \cap V_j = \gen{a, \alpha_i b + c \rangle_K \cap \langle b, \alpha_j a + c}$, for $i,j \in \myset{q}$. After computations we get $Z_{ij} = \gen{\alpha_j a + \alpha_i  b + c}$. All the spaces $Z_{ij}$ for $i,j \in \myset{q}$, $V_m$ and $U_m$ are different. From \Cref{lemma-zij-properties} (b), $\sum_{i=1}^q \id_{Z_{ij}} + \id_{U_m} = \id_{V_j} + q\id_S$, for any $j \in \myset{q}$. Note that $V_i \cap V_j = \gen{b} = U_m$, for $i \neq j\in \myset{q}$.

For all $j \in \myset{q}$ calculate the projection of both sides of \cref{eq-main-counting-space}, multiplied from both sides by $q^n$, on $V_j$, where $j \in \myset{q}$,
\begin{equation}\label{teq1}
	\left.\left(\sum_{i=1}^{q} \id_{V_i} + q \id_{V_m}\right) \right\vert_{V_j} = \sum_{i=1}^{q} \id_{V_i \cap V_j} + q \id_{V_m \cap V_j} =
	\id_{V_j} + (q-1) \id_{U_m} + q\id_{V_m \cap V_j}\;,
\end{equation}
\begin{equation}\label{teq2}
	\left.\left(\sum_{i=1}^{q} \id_{U_i} + q\id_{U_m}\right) \right\vert_{V_j} = \sum_{i=1}^{q} \id_{U_i \cap V_j} + q \id_{U_m \cap V_j} =
	\sum_{i=1}^{q} \id_{Z_{ij}} + q \id_{U_m}\;.
\end{equation}

Considering the fact that $V_m \cap V_j = S$, the projection of the left and the right side of \cref{eq-main-counting-space} are equal and therefore the pair $(\U,\V)$ of the Type B is a nontrivial solution. It is easy to see that $\card{X} = \card{Y} = 1$, the corresponding spaces of dimension $n-1$ are $V_m$ and $U_m$.

Prove the second part. Let $(\U,\V)$ be a nontrivial solution with $\card{X} = \card{Y} = 1$. At first, note some properties of the spaces in $\V$ and $\U$. From \Cref{lemma-properties} (d), $U_m \subset V_i$ and $V_m \subset U_i$ for all $i \in \myset{m}$. As a result, using \Cref{lemma-properties} (b), $V_i \cap V_j = U_m$ and $U_i \cap U_j = V_m$ for all $i \in \myset{m}$. Also, $V_m \cap V_i = U_m \cap U_i = S$ for $i \in \myset{q}$. From \Cref{lemma-zij-properties} (c), all the spaces $V_m$, $U_m$ and $Z_{ij}$, $i \in \myset{q}$ are different.

Let $a,b,c \in W$ be three vectors, such that $V_m = \langle a \rangle_K$, $U_m = \langle b \rangle_K$, $Z_{11} = \langle c \rangle_K$. From the properties that we mentioned above, the spaces $V_m$, $U_m$ and $Z_{11}$ are all different so the vectors $a,b,c$ are pairwise linearly independent. Obviously, $V_1 =\langle b,c \rangle_K$ and $U_1 = \langle a, c\rangle_K$. \Cref{lemma-properties} (a) states that $V_1 \neq U_1$ and thus all three vectors $a,b,c \in W$ are linearly independent.

The plane $U_1$ is covered by $m$ different lines $V_m, Z_{11}, \dots, Z_{1q}$. Let $v_2, \dots, v_q$ be such that $Z_{1i} = \gen{v_i}$  for $i \in \{2, \dots, q\}$. In the same way, the plane $V_1$ is covered by the lines $U_m, Z_{11}, \dots, Z_{q1}$. Let $Z_{i1} = \langle u_i \rangle_K$, for some $u_i \in W$, where $i \in \{2, \dots, q\}$.

In \Cref{tab-type-b} there are illustrated the intersections of the spaces $V_1, \dots, V_m$, $U_1, \dots, U_m$. Note that if we calculate the union of all spaces in a row of the table, we get the space that corresponds to the row. The same is for the columns of the tables.
To satisfy this requirement, for all $i \in \{ 2, \dots, q \}$, $v_i \in \langle a, c \rangle_K$, $u_i \in \langle b, c \rangle_K$,  $V_i = \langle b, v_i \rangle_K$ and $U_i = \langle a, u_i \rangle_K$. Hence $v_i = \alpha_i a + \beta_i c$, $u_i = \gamma_i b + \delta_i c$ for some $\alpha_i, \beta_i, \gamma_i, \delta_i \in K$, $i \in \{2,\dots,q\}$. Then $Z_{ij} = U_i \cap V_j = \langle a, u_i \rangle_K \cap \langle b, v_j \rangle_K= \langle a, \gamma_i b + \delta_i c \rangle_K \cap \langle b, \alpha_j a + \beta_j c \rangle_K$. After computations we get $Z_{ij} = \langle \delta_i \alpha_j a + \gamma_i \beta_j  b + \delta_i \beta_j c \rangle_K$ or the same, since $b_j \neq 0$ and $\delta_i \neq 0$, $Z_{ij} = \langle \frac{\alpha_j}{\beta_j} a + \frac{\gamma_i}{\delta_i}  b + c \rangle_K$. As we have mentioned before, all the spaces $Z_{ij}$ for $i,j \in \myset{q}$ should be different, thus the values $\frac{\alpha_i}{\beta_i}$ and  $\frac{\gamma_i}{\delta_i}$ should both run through $K$ while $i$ runs through $\myset{q}$. As a result, we showed that the pair $(\U,\V)$ is, up to an order of spaces in tuples, exactly of the Type B.
\end{proof}

			\begin{table}[!ht]
				\centering
				\begin{tabular}{c|c|c|c|c|c|c|}
					$\cap$ & $V_m$ & $V_1$ & \dots & $V_i$ & \dots & $V_q$ \\
					\hline
					$U_m$ & $\{0\}$ & $\gen{b}$ & \dots  & $\gen{b}$ & \dots & $\gen{b}$ \\
					\hline
					$U_1$ & $\gen{a}$ & $\gen{c}$ & \dots & $\gen{v_i}$ & \dots & $\gen{v_q}$ \\
					\hline
					$\vdots$ & $\vdots$ & $\vdots$ & $\ddots$ & $\vdots$ & $\vdots$ & $\vdots$ \\
					\hline
					$U_j$ & $\gen{a}$ & $\gen{u_j}$ & $\dots$ & $Z_{ji}$ & $\dots$ & $Z_{jq}$ \\
					\hline
					$\vdots$ & $\vdots$  & $\vdots$ & $\vdots$ &  $\vdots$& $\ddots$ & $\vdots$ \\
					\hline
					$U_q$ & $\gen{a}$ & $\gen{u_q}$ & $\dots$ & $Z_{qi}$ & $\dots$ & $Z_{qq}$ \\
					\hline
				\end{tabular}
				\caption{Intersection table for a solutions of Type B}
				\label{tab-type-b}
			\end{table}

\begin{definition}
	Say that a pair $(\U,\V)$ if of Type C, if there exist a subspace $S \subset W$ and linearly independent vectors $a,b,c,d \in W$, where $S \cap \gen{a,b,c,d} = \{0\}$, such that $V_i  = \gen{S, \alpha_i a + \beta_i b, \alpha_i c + \beta_i d}$ and $U_i = \gen{S, \alpha_i a + \beta_i c, \alpha_i b + \beta_i d}$, where $[\alpha_i : \beta_i]$ is the $i$th element in $\mathbb{P}_1(K)$, for $i \in \myset{m}$.
\end{definition}
In \Cref{fig-typeC} there is presented the inclusion diagram of spaces in a pair of Type C along with intersections.

\begin{figure}[!ht]
	\centering
	\begin{tikzpicture}
	\node (topU1) at (-5,2) {$U_1$};
	\node (udotsl) at (-4,2) {$\dots$};
	\node (udots) at (-3,2) {$U_i$};
	\node (udotsr) at (-2,2) {$\dots$};
	\node (topUq) at (-1,2) {$U_{m}$};
	\node (S) at (0,0) {$S$};
	\node (topV1) at (1,2) {$V_1$};
	\node (vdotsl) at (4,2) {$\dots$};
	\node (vdots) at (3,2) {$V_j$};
	\node (vdotsr) at (2,2) {$\dots$};
	\node (topVq) at (5,2) {$V_{m}$};
	\node (zij) at (0,1) {$Z_{ij}$};
	\draw  (topU1) -- (S) -- (udots) -- (S) -- (topUq);
	\draw  (topV1) -- (S) -- (vdots) -- (S) -- (topVq);
	\draw (udots) -- (zij) -- (S);
	\draw (vdots) -- (zij) -- (S);
	\end{tikzpicture}
	\caption{
		A pair of Type C}
	\label{fig-typeC}
\end{figure}
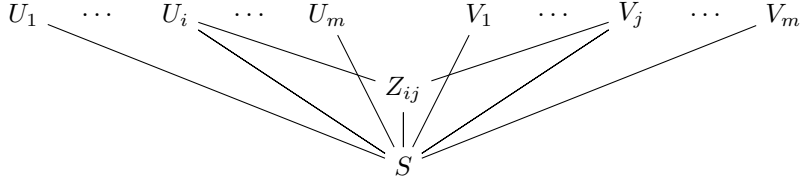

\begin{proposition}\label{thm-typeC-detailed-description}
	A pair $(\U,\V)$ of Type C is a nontrivial solution with $\card{X} = \card{Y} = 0$. If a pair $(\U,\V)$ is a nontrivial solution with $\card{X} = \card{Y} = 0$, then $(\U,\V)$ is equivalent to a solution of Type C.			
\end{proposition}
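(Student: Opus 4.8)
The plan is to follow the scheme of \Cref{thm-typeB-detailed-description}. By \Cref{lemma-common-subspace} and \Cref{thm-factor-equation} I may factor out the common subspace $S$ and assume throughout that $S=\{0\}$ and $n=2$, so that every $V_i$ and $U_i$ is a plane in $W$; the hypothesis $\card{X}=\card{Y}=0$ then says all these planes have dimension $2$, and the defining vectors $a,b,c,d$ of Type C span a four-dimensional subspace.

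For the first part I would verify $\sum_{i=1}^m \id_{V_i}=\sum_{i=1}^m \id_{U_i}$ pointwise, which suffices since the common factor $q^{-n}$ cancels. Writing $x=x_a a+x_b b+x_c c+x_d d$ and unwinding the definitions, $x\in V_i$ holds iff both $(x_a,x_b)$ and $(x_c,x_d)$ are scalar multiples of $(\alpha_i,\beta_i)$, and $x\in U_i$ holds iff both $(x_a,x_c)$ and $(x_b,x_d)$ are. Arranging the coordinates as the matrix $\left(\begin{smallmatrix}x_a & x_b\\ x_c & x_d\end{smallmatrix}\right)$, the number of $i$ with $x\in V_i$ is the number of lines of $K^2$ containing both rows, while the number of $i$ with $x\in U_i$ is the number containing both columns; each equals $q+1$, $1$ or $0$ according as the rank of the matrix is $0$, $1$ or $2$. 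Since row rank equals column rank, the two counts agree for every $x$, proving the equation. All $2m$ spaces are planes, so $\card{X}=\card{Y}=0$; and the plane $\gen{a,c}$ occurs among the $V_i$ but among no $U_j$ (otherwise $\alpha_j b+\beta_j d\in\gen{a,c}$ forces $\alpha_j=\beta_j=0$), so $\V\not\sim\U$ and the solution is nontrivial.

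For the converse I would first extract the combinatorial skeleton. Since $\card{X}=\card{Y}=0$, \Cref{lemma-zij-properties}(b) shows that for each fixed $j$ the lines $Z_{ij}=U_i\cap V_j$ ($i\in\myset{m}$) are exactly the $q+1$ lines of $V_j$, and symmetrically the $Z_{ij}$ ($j\in\myset{m}$) are the lines of $U_i$. Using \Cref{lemma-zij-properties}(c), a common line of $V_i$ and $V_j$ would be a repeated value $Z_{ki}=Z_{lj}$, forcing $i=j$; hence $V_i\cap V_j=\{0\}$ and likewise $U_i\cap U_j=\{0\}$ for $i\neq j$. Fixing two planes $V_1,V_2$ yields $W_0:=V_1\oplus V_2$ of dimension $4$, and since each $U_k$ meets $V_1$ and $V_2$ in distinct lines it is their sum, so $U_k\subseteq W_0$; the same argument puts every $V_j$ in $W_0$. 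Thus $U_i=L_i\oplus M_i$ with $L_i=U_i\cap V_1$, $M_i=U_i\cap V_2$, where $\{L_i\}$ and $\{M_i\}$ run through all lines of $V_1$ and of $V_2$ respectively.

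The heart of the argument, and the step I expect to be the main obstacle, is to pin down how the two rulings fit together. For $j\neq 1,2$ the plane $V_j$ meets neither $V_1$ nor $V_2$, so it is the graph of an isomorphism $\theta_j\colon V_1\to V_2$. Computing $Z_{ij}=U_i\cap V_j$ shows it is a line precisely when $\theta_j(L_i)=M_i$, and as every $Z_{ij}$ is a line this must hold for all $i$. Hence each $\theta_j$ induces one and the same projectivity $L_i\mapsto M_i$, so the maps $\theta_j$ all agree up to a scalar $\lambda_j\in K^\ast$ with a single fixed isomorphism $\theta$ (a map of $V_1$ fixing every line is a scalar). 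Choosing a basis $e_1\in L_1$, $e_2\in L_2$ of $V_1$ and $f_1=\theta e_1$, $f_2=\theta e_2$ of $V_2$, the planes become $V_1=\gen{e_1,e_2}$, $V_2=\gen{f_1,f_2}$, $V_j=\gen{e_1+\lambda_j f_1,\,e_2+\lambda_j f_2}$ and $U_i=\gen{p_i e_1+q_i e_2,\,p_i f_1+q_i f_2}$, where $[p_i:q_i]$ runs through $\mathbb{P}_1(K)$. With $a=e_1$, $b=f_1$, $c=e_2$, $d=f_2$ these are exactly the Type C families (the $V$'s with parameter $[1:\lambda_j]$, the $U$'s with parameter $[p_i:q_i]$), and since equivalence of pairs permits independent reindexing of the two tuples, $(\U,\V)$ is equivalent to a solution of Type C.
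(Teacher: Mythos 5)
Your proof is correct, but both halves take genuinely different routes from the paper's. For the direct part, the paper computes the lines $Z_{ij}$ explicitly, invokes \Cref{lemma-minimum-dense-covering-desc}, and checks the equation by restricting both sides to each plane $V_j$ (\cref{t2eq1,t2eq2}); your pointwise count via the $2\times 2$ coordinate matrix --- the number of $V_i$ (resp.\ $U_i$) containing $x$ is $q+1$, $1$ or $0$ according to whether the rows (resp.\ columns) span a space of dimension $0$, $1$ or $2$, and row rank equals column rank --- is shorter, bypasses the covering lemma entirely, and, unlike the paper, makes nontriviality explicit (the paper merely asserts it). For the converse, the two arguments share the same skeleton (all $Z_{ij}$ are distinct lines, $V_i \cap V_j = U_i \cap U_j = \{0\}$ for $i \neq j$, and everything lies in the four-dimensional $V_1 \oplus V_2$), though you extract the pairwise-trivial intersections from \Cref{lemma-zij-properties}~(c) while the paper re-derives them from the restriction identities. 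The endgame differs substantially: the paper introduces coordinates $a,b,c,d$ from $Z_{11},Z_{12},Z_{21},Z_{22}$ plus eight families of coefficients and matches them using distinctness of the $Z_{ij}$, whereas you view each $V_j$ ($j \neq 1,2$) as the graph of an isomorphism $\theta_j : V_1 \to V_2$, note that all $\theta_j$ carry $L_i$ to $M_i$ and hence differ only by scalars (a linear automorphism of a plane fixing all $q+1 \geq 3$ of its lines is scalar), and read off Type C in an adapted basis. Your version is more conceptual --- it explains why the two rulings are transposes of one another rather than verifying it coefficient by coefficient --- while the paper's computation has the side benefit of producing the explicit intersection table (\Cref{tab-type-c}) reused later in \Cref{thm-number-solutions-unique-nontrivial}. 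Two harmless details you leave implicit: for $x \notin \gen{a,b,c,d}$ both sides of the equation vanish, so the pointwise check on the span suffices; and the scalars $\lambda_j$ are pairwise distinct (otherwise $V_j = V_{j'}$, contradicting \Cref{lemma-properties}~(b)), which is what makes the $V$-parameters $[1:0]$, $[0:1]$, $[1:\lambda_j]$ exhaust $\mathbb{P}_1(K)$.
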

\begin{proof}
To simplify both parts of the proof, according to \Cref{lemma-common-subspace} and \Cref{thm-factor-equation}, we can assume $S = \{0\}$ and $n = 2$.

Prove the first part. Calculate the intersection of the spaces, $Z_{ij} = U_i \cap V_j = \gen{\alpha_i a + \beta_i c, \alpha_i b + \beta_i d} \cap \gen{\alpha_j a + \beta_j b, \alpha_j c + \beta_j d}$, for $i,j \in \myset{m}$. After computations we get $Z_{ij} = \gen{\alpha_i \alpha_j a + \beta_i \alpha_j b + \alpha_i \beta_j c + \beta_i \beta_j d}$. All the spaces $Z_{ij}$ for $i,j \in \myset{m}$ are different. From \Cref{lemma-minimum-dense-covering-desc}, $\sum_{i=1}^m \id_{Z_{ij}} = \id_{V_j} + q\id_S$, for any $j \in \myset{m}$. Note that $V_i \cap V_j = \{0\}$, for $i \neq j \in \myset{m}$.

We calculate for $j \in \myset{m}$ the restriction of both sides of \cref{eq-main-counting-space}, multiplied by $q^n$, on the space $V_j$,
\begin{equation}\label{t2eq1}
	\left.\left(\sum_{i=1}^m \id_{V_i} \right)\right\vert_{V_j} =
	\id_{V_j} + \sum_{i \neq j} \id_{V_i \cap V_j}\,\,,
\end{equation}
\begin{equation}\label{t2eq2}
	\left.\left(\sum_{i=1}^m \id_{U_i}\right) \right\vert_{V_j} = \sum_{i=1}^m \id_{U_i \cap V_j} =
	\sum_{i=1}^m \id_{Z_{ij}}\,\,.
\end{equation}

Obviously, the pair $(\U,\V)$ of Type C satisfy these equations for any $j \in \myset{m}$, and therefore is a nontrivial solution with $\card{X}= \card{Y} = 0$.

Prove the second part. Let $(\U,\V)$ be a nontrivial solution with $\card{X} = \card{Y} = 0$. Using \cref{teq0}, and the fact that the right sides of \cref{t2eq1} and \cref{t2eq2} are equal, for the fixed $j$, $\sum_{i \neq j} \id_{V_i \cap V_j} = q \id_{S}$. From \Cref{lemma-minimum-dense-covering-desc} for all $i \neq j \in \myset{m}$, $V_i \cap V_j = S$. In the same way for all $i \neq j \in \myset{m}$, $U_i \cap U_j = S$.
From \Cref{lemma-zij-properties} for all $i,j \in \myset{m}$, $\dim_K Z_{ij} = n-1$ and the spaces $Z_{ij}$, $i,j \in \myset{m}$ are all different.

Let $a, b, c, d \in W$ be such that $Z_{11} = \langle a \rangle_K$, $Z_{12} = \langle b \rangle_K$, $Z_{21} = \langle c \rangle_K$ and $Z_{22} = \langle d \rangle_K$. We deduce that $V_1 = \langle a, c \rangle_K$, $V_2 = \langle b, d \rangle_K$, $U_1 = \langle a, b \rangle_K$ and $U_2 = \langle c,d \rangle_K$. The intersection $V_1 \cap V_2 = \{0\}$ and $\dim_K V_1 = \dim_K V_2 = 2$ that implies the linearly independence of the vectors $a,b,c,d$.

Consider the \Cref{tab-type-c} where in the cells there are the one-dimensional spaces $Z_{ij} = U_i \cap V_j$. The union of the lines in each row gives the space that represents the row and the union of the lines in each column gives the space that represents the column.

Let $\alpha_i, \beta_i, \alpha'_i, \beta'_i, \gamma_i, \delta_i, \gamma'_i, \delta'_i \in K$ be such that $Z_{1i} = \langle \alpha_i a + \beta_i b \rangle_K$, $Z_{i1} = \langle \gamma_i a + \delta_i c \rangle_K$, $Z_{i2} = \langle \gamma'_i b + \delta'_i d \rangle_K$ and $Z_{2i} = \langle \alpha'_i c + \beta'_i d \rangle_K$ for $i \in \myset{m}$. 
With the defined coefficients we get $V_i = \langle \alpha_i a + \beta_i b, \alpha'_i c + \beta'_i d \rangle_K$ and $U_i = \langle \gamma_i a + \delta_i c, \gamma'_i b + \delta'_i d \rangle_K$. Since all the spaces $Z_{ji}$, $i,j \in \myset{m}$ should be different, the equalities 
$\alpha_i'\beta_i = \beta_i' \alpha_i$, $\gamma'_i \delta_i = \delta_i' \gamma_i$ hold for all $i \in \myset{m}$ and the intersection space is $Z_{ji} = \gen{ \alpha_i\gamma_j a + \gamma_j \beta_i b  + \alpha_i \delta_j c + \delta_j \beta_i d}$. It is easy to verify that $(\U,\V)$ is of Type C.
\end{proof}
	
			\begin{table}[!ht]
				\centering
				\begin{tabular}{c|c|c|c|c|c|c|}
					$\cap$& $V_1$ & $V_2$ & \dots & $V_i$ & \dots & $V_m$ \\
					\hline
					$U_1$ & $\gen{a}$ & $\gen{b}$ & \dots  & & \dots &  \\
					\hline
					$U_2$ & $\gen{c}$ & $\gen{d}$ & \dots &  & \dots &  \\
					\hline
					$\vdots$ & $\vdots$ & $\vdots$ & $\ddots$ & $\vdots$ & $\vdots$ & $\vdots$ \\
					\hline
					$U_j$ &  &  & $\dots$ & $Z_{ji}$ & $\dots$ & $Z_{jm}$ \\
					\hline
					$\vdots$ & $\vdots$  & $\vdots$ & $\vdots$ &  $\vdots$& $\ddots$ & $\vdots$ \\
					\hline
					$U_m$ & &  & $\dots$ & $Z_{mi}$ & $\dots$ & $Z_{mm}$ \\
					\hline
				\end{tabular}
				\caption{Intersection table for the solutions of Type C}
				\label{tab-type-c}
			\end{table}

\begin{theorem}\label{thm-main-abc}
	Let $(\U,\V)$ be a nontrivial solution of \cref{eq-main-counting-space} with $m = \card{K} + 1$. Up to an equivalence, the pair $(\U,\V)$ is of one of the following types: Type A, Type B or Type C.
\end{theorem}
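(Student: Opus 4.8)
The plan is to prove the theorem by assembling the classification pieces already established, organised as a case analysis on whether the two tuples attain the same maximal dimension. Since the statement concerns solutions only up to equivalence, and the pair-equivalence permits interchanging $\U$ and $\V$ (indeed $(\U,\V)\sim(\V,\U)$, and being a nontrivial solution is preserved under equivalence), I may freely swap the two tuples whenever convenient.

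First I would split according to $\max_{i}\dim_K V_i$ versus $\max_{i}\dim_K U_i$. If these maxima differ, then after possibly swapping $\U$ and $\V$ I may assume $\max_i\dim_K V_i > \max_i\dim_K U_i$, and \Cref{thm-type-a} immediately yields that $(\U,\V)$ is equivalent to a solution of Type A. This disposes of the unequal case with no extra work.

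The substantive case is when the maxima coincide, say both equal $n$. Here I would first argue that $n>1$: if $n\le 1$, every $V_i$ and every $U_i$ is either $\{0\}$ or a line, and evaluating \cref{eq-main-counting-space} at a nonzero vector $x$ shows that, for each line $L$, the multiplicity of $L$ among the $V_i$ equals its multiplicity among the $U_i$, since only the spaces equal to $\gen{x}$ contribute at $x$. Evaluating at $0$ then forces the numbers of zero spaces on the two sides to agree as well, so $\U\sim\V$ and the solution is trivial, contradicting the hypothesis. With $n>1$ secured, the standing assumptions of the previous section are in force (after relabelling so that the maximum is attained at $V_1$ and $U_1$), and \Cref{lemma-two-types} leaves exactly the two possibilities $\card{X}=\card{Y}=1$ and $\card{X}=\card{Y}=0$. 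In the first, \Cref{thm-typeB-detailed-description} gives equivalence to Type B; in the second, \Cref{thm-typeC-detailed-description} gives equivalence to Type C. Together with the Type A case this exhausts all nontrivial solutions.

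I expect the only genuine obstacle to be the degenerate subcase $n\le 1$, which is the single link in the chain not already packaged by an earlier statement: the propositions describing Types B and C carry the standing assumption $n>1$, so one must independently verify that equal maximal dimension together with nontriviality forces $n>1$. The remainder is bookkeeping — checking that the pair-equivalence really does let me normalise which tuple carries the larger maximum and which indices realise the maxima, so that the hypotheses of \Cref{thm-type-a}, \Cref{lemma-two-types}, \Cref{thm-typeB-detailed-description} and \Cref{thm-typeC-detailed-description} are legitimately satisfied.
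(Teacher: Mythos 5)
Your proposal is correct and follows essentially the same route as the paper's own proof: split on whether the two maximal dimensions agree, invoke \Cref{thm-type-a} in the unequal case, and in the equal case use \Cref{lemma-two-types} to reduce to $\card{X}=\card{Y}\in\{0,1\}$ and then \Cref{thm-typeB-detailed-description} and \Cref{thm-typeC-detailed-description}. Your extra verification that $n>1$ in the equal-maxima case (evaluating \cref{eq-main-counting-space} at nonzero points and at zero to show an all-lines solution must be trivial) addresses a degenerate subcase that the paper's proof passes over silently, since the standing assumption $n>1$ of the preceding section is not part of the theorem's hypotheses; this makes your argument slightly more complete than the original.
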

\begin{proof}
	If $\max_{1 \leq i \leq q+1} \dim_K V_i \neq \max_{1 \leq i \leq q+1} \dim_K U_i$, by \Cref{thm-type-a}, $(\U,\V)$ is equivalent to a pair of Type A. If $\max_{1 \leq i \leq q+1} \dim_K V_i = \max_{1 \leq i \leq q+1} \dim_K U_i$, by \Cref{lemma-two-types}, either $\card{X} = \card{Y} = 1$ or $\card{X} = \card{Y} = 0$. In the first case, using \Cref{thm-typeB-detailed-description}, $(\U,\V)$ is equivalent to a pair of Type B. In the first case, using \Cref{thm-typeC-detailed-description}, $(\U,\V)$ is equivalent to a pair of Type C.
\end{proof}

After the full description and classification of all the nontrivial minimal solutions have been made, we can prove some interesting facts on their properties. 
		
\begin{proposition}\label{thm-number-solutions-unique-nontrivial}
	Let $m = q+1$. For any tuple of spaces $\V$ there exists at most one tuple of spaces $\U$, up to an equivalence, such that $(\U,\V)$ is a nontrivial solution.
\end{proposition}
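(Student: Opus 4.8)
The plan is to show that the tuple $\V$ by itself already determines $\U$ up to reordering, by combining the classification of \Cref{thm-main-abc} with a rigidity argument carried out separately inside each type. Concretely, let $(\U,\V)$ and $(\U',\V)$ be two nontrivial solutions sharing the same $\V$; I will argue that they necessarily fall under the same type and that $\U$ and $\U'$ are reconstructed from $\V$ in the same way, so that $\U\sim\U'$. The first step is to read the type off from $\V$ alone. Put $n=\max_i\dim_K V_i$ and $S=\bigcap_{i=1}^m V_i$, both intrinsic to $\V$; in the equal-maximum cases \Cref{lemma-common-subspace} gives $\dim_K S=n-2$. The four possible shapes of $\V$ (Type A in its two orientations, Type B, Type C) are then pairwise distinct: the ``large-plus-small'' orientation of Type A is the only shape whose top-dimensional space is repeated (this is where $q\geq2$ is used, $q=\card{K}$); among the shapes with all entries distinct, Type B is singled out as the only one whose dimension multiset contains an entry of dimension $n-1$; and the two remaining all-equal-dimension shapes, namely the hyperplane orientation of Type A and Type C, are separated by $\dim_K S$, which equals $n-1$ in the former and $n-2$ in the latter. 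Since these invariants depend only on $\V$, both $(\U,\V)$ and $(\U',\V)$ are forced into the same type, with the same $S$ and $n$.

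Within each type I would then recover $\U$ from $\V$. By \Cref{thm-factor-equation} I may factor out $S$; in the equal-maximum cases this reduces to $S=\{0\}$, $n=2$, so the relevant spaces become lines and planes. In Type A the set $\U$ consists of all hyperplanes of the large space $V=\sum_i V_i$ containing $S$ (respectively, in the mirror orientation, of the tuple $(V,\dots,V,S)$), which is read off directly from $\V$. In Type B the planes of $\V$ share a common line, namely $\bigcap_{i\in\myset{q}}V_i$, which must be $U_m$; moreover $V_m\subset U_i$ for every $i$ by \Cref{lemma-properties}(d), while by \Cref{lemma-zij-properties} together with \Cref{lemma-minimum-dense-covering-desc} the traces $U_i\cap V_1$ for $i\in\myset{q}$ run exactly over the lines of $V_1$ other than $U_m$. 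Hence each $U_i=\gen{V_m, U_i\cap V_1}$ is forced, and $\U$ is determined by $\V$ up to the order of its entries.

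The genuinely rigid step, and the one I expect to be the main obstacle, is Type C, where the planes of $\V$ meet pairwise only in $\{0\}$ and there is no common line to latch onto. Here I would use (via \Cref{lemma-zij-properties} and \Cref{lemma-minimum-dense-covering-desc}) that every $U_i$ meets each $V_j$ in a line and that, for each fixed $j$, these traces exhaust all lines of $V_j$. The key lemma to establish is uniqueness of the transversal: for a line $\ell\subset V_1$ there is at most one plane $U$ with $\ell\subset U$ meeting both $V_2$ and $V_3$ in lines. Indeed, all entries of a Type-C tuple lie in a common four-dimensional space by definition, so $V_1+V_2+V_3$ is four-dimensional; if $U\neq U'$ were two such planes, then $U+U'=\gen{\ell}\oplus V_2$ would be three-dimensional and would meet $V_3$ in a single line $r$, forcing $U=U'=\gen{\ell,r}$, a contradiction. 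Consequently $U_i$ is determined by its trace $U_i\cap V_1$, and since these traces range over all lines of $V_1$, the tuple $\U$ is determined by $V_1,V_2,V_3\subset\V$. In every case $\U$ is thus recovered from $\V$ up to reordering, i.e.\ up to $\sim$; applying this to both $(\U,\V)$ and $(\U',\V)$ yields $\U\sim\U'$, which is the asserted uniqueness.
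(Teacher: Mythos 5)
Your proof is correct, and its skeleton is the same as the paper's: invoke the classification of \Cref{thm-main-abc}, then show type by type that one tuple of a nontrivial solution determines the other up to reordering. Two substantive differences are worth noting. First, you make explicit a step the paper leaves implicit: that two nontrivial solutions sharing the tuple $\V$ must fall under the same type and orientation. Your invariants (a repeated top-dimensional entry for the large orientation of Type A, where $q\geq 2$ is indeed needed; an entry of dimension $n-1$ for Type B; $\dim_K\bigcap_i V_i$ equal to $n-1$ versus $n-2$ separating the hyperplane orientation of Type A from Type C) are all computed correctly, and this adds completeness that the paper's proof, which just says the Type A case is obvious and then treats Types B and C with the type tacitly fixed, does not spell out. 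Second, your Type C rigidity argument is genuinely different: the paper works with the explicit coordinates $a,b,c,d$ of \Cref{tab-type-c} and uses the unique decomposition of the vector $a+c$ in the direct sum $U_1\oplus U_2$ to force the trace pairing, whereas you prove a synthetic lemma --- through a given line $\ell\subset V_1$ there is at most one plane meeting both $V_2$ and $V_3$ in lines, by a dimension count inside the four-dimensional space $V_1+V_2$ --- and this lemma is sound (two such planes would span $\gen{\ell}\oplus V_2$, which meets $V_3$ in exactly one line since $V_3\cap V_2=\{0\}$ forbids $V_3\subseteq\gen{\ell}\oplus V_2$). The paper's version is shorter because the coordinates are already on the table from \Cref{thm-typeC-detailed-description}; yours is coordinate-free and makes the rigidity mechanism more transparent. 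Your Type B recovery ($U_m=\bigcap_{i\in\myset{q}}V_i$ and $U_i=\gen{V_m,\,U_i\cap V_1}$ with the traces exhausting the lines of $V_1$ other than $U_m$) is essentially the mirror image of the paper's argument, with the roles of $\U$ and $\V$ exchanged, which is immaterial by the symmetry of \cref{eq-main-counting-space}.
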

		\begin{proof}
The statement is obvious for the solution of Type A.
			By \Cref{lemma-common-subspace} and \Cref{thm-factor-equation}, we can assume $S = \{0\}$.
			Consider the solution $(\U,\V)$ of Type B. Having the tuple $\U$ we can uniquely, up to equivalence, recover the tuple $\V$. Really, at first, we recover the space $V_m = \gen{a}$ as the intersection of any two two-dimensional spaces and we already have $U_m = \gen{b}$. Assume that there exists another solution $(\U, \V')$, where $\V' = (V_1',\dots, V_m')$. But then $V'_i = \gen{b, x_i}$, $x_i \in U_1$ for $i \in \myset{q}$, where $\{x_i\}_{i \in \myset{m}}$ is a set of all lines in $U_1$. Therefore $\V \sim \V'$.
			
			Consider the solution $(\U,\V)$ of Type C. Using the notations from \Cref{tab-type-c}, the vector $a+c$ is in $V_1$ and in $U_i$, for some $i \in \myset{m}$. Let $(\U,\V')$ be another solution. For this solution let $i,j \in \myset{m}$ be such that $a \in V_i \cap U_1$ and $b \in V'_j \cap U_1$. Also, let $c',d' \in W$ be such that $\gen{c'} = V'_i \cap U_2$ and $\gen{d'} = V'_j \cap U_2$. Then $V'_i = \gen{a,c'}$, $V'_j = \gen{b,d'}$. The vector $a + c$ should be presented in some intersection with $V'_k$, for $k \in \myset{m}$. This is only possible if $\gen{c} = \gen{c'}$. In the same way, observing the vector $b +d$, we deduce $\gen{d'} = \gen{d}$ and thus $\V \sim \V'$.
		\end{proof}
		
		\begin{proposition}\label{thm-sum-of-two-spaces}
			Let $(\U,\V)$ be a nontrivial solution with $m = q+1$. For any $i\neq j \in \myset{m}$, for any $k \in \myset{m}$, $V_k,U_k \subseteq V_i + V_j$. For any $i\neq j \in \myset{m}$, $\dim_K V_i + V_j \leq 2 + \max_{i \in \myset{m}} \dim_K V_i$.
		\end{proposition}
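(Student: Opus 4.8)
The plan is to invoke the classification of \Cref{thm-main-abc}: up to equivalence $(\U,\V)$ is of Type A, B, or C, so it suffices to verify both assertions on one explicit representative of each type, reading the spaces off their generators. Before doing the case work I would recast the two claims in an equivalence-invariant form. An equivalence may permute the entries of each tuple and may also interchange $\U$ and $\V$; since the second assertion is phrased only in terms of the $V_i$, while an equivalence could send our $\V$ to the $U$-side of a type, I want a statement symmetric in the two tuples. To that end I would attach to each type a single \emph{total space} $T$ --- namely $T = \gen{S,a,b}$ for Type A, $T = \gen{S,a,b,c}$ for Type B, and $T = \gen{S,a,b,c,d}$ for Type C --- and prove the strengthened claim that every $V_k$ and every $U_k$ lies in $T$ and that $V_i + V_j = U_i + U_j = T$ for all $i \neq j$. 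Both parts of the proposition then follow immediately: containment yields $V_k, U_k \subseteq V_i + V_j$, and the dimension bound reduces to comparing $\dim_K T$ with $\max_k \dim_K V_k$.

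The verification of the strengthened claim is a routine span computation, organized around the two facts ``everything sits in $T$'' and ``any two distinct members already generate $T$''. Containment is immediate: every generator appearing in the definition of a type lies in $T$ by inspection. For $V_i + V_j = T$ (and symmetrically $U_i + U_j = T$) the input is the nondegeneracy built into each definition. In Type A the $V$-side consists of $T$ itself together with the single space $S$, while the $U$-side consists of distinct hyperplanes of $T$ through $S$; in either orientation two distinct members sum to $T$, and a pair cannot consist of two copies of the small space, since only one index carries it. In Type C the distinctness $[\alpha_i:\beta_i] \neq [\alpha_j:\beta_j]$ makes $(\alpha_i,\beta_i)$ and $(\alpha_j,\beta_j)$ linearly independent, so the four generators of $V_i + V_j$ recover all of $a,b,c,d$, giving $V_i + V_j = T$; the $U$-side is identical after relabelling the generators. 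Type B is the hybrid case: two generic indices recover $a,b,c$ by the same rank-two argument, and the pairs involving the exceptional spaces $V_m = \gen{S,a}$ or $U_m = \gen{S,b}$ are handled by absorbing the distinguished vector into the span.

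Finally I would read the dimension bound off $\dim_K T$. In Type A, $T$ is itself the largest space, so $\dim_K T - \max_k \dim_K V_k \in \{0,1\}$ according to orientation; in Type B both tuples have maximum dimension one less than $\dim_K T$, giving a gap of $1$; and in Type C both tuples have maximum dimension two less than $\dim_K T$, giving a gap of exactly $2$. Hence in every case and orientation $\dim_K(V_i+V_j) = \dim_K T \leq 2 + \max_k \dim_K V_k$, with Type C saturating the bound. The computations themselves are mechanical; the point that needs genuine care is precisely the equivalence-invariance flagged above, namely that the total-space identity must be established for the $U$-tuple as well as the $V$-tuple, and I expect that bookkeeping --- rather than any single span calculation --- to be the main obstacle.
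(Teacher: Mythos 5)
Your proof is correct and takes essentially the same route as the paper: both invoke the classification of \Cref{thm-main-abc} and then verify the two claims case by case for Types A, B and C by explicit span computations, with the same attention to the orientation (swap of $\U$ and $\V$) issue. The only organizational difference is that you verify $U_i+U_j=T$ directly in each type, whereas the paper checks only the $V$-side sums and then deduces containment of the $U_k$ from the fact that all the $V_t$ lie in $V_i+V_j$ (the two sides of the isometry equation having equal support), so both arguments succeed for the same underlying reasons.
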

		\begin{proof}
			The description of all the nontrivial solutions for the codes of the length $m = q + 1$ is given in \Cref{thm-main-abc}. Let $n = \max_{i \in \myset{m}} \dim_K V_i$ and fix $i,j \in \myset{m}$, $i\neq j$.
			If the solution is of Type B, then the space $V_i+V_j$ is of dimension $\dim_K V_1 + 1 = n + 1$ and contains all the spaces $V_1, \dots, V_m$. If the solution is of Type C, then the space $V_i+ V_j$ has the dimension $\dim_K V_1 +2 = n + 2$ and contains all the spaces $V_1,\dots, V_m$. 
			Regarding the solution of Type A, depending on which tuple of spaces we observe, the space $V_i + V_j$ contains all the spaces from the tuple and has the dimension $n$ or $n+1$.
			Combining these three cases, all the spaces $V_1, \dots, V_m$ are in the space $V_i + V_j$ and therefore the spaces $U_1, \dots, U_m$ are all in $V_i + V_j$. Also, $\dim_K V_i + V_j \leq n + 2$. 
		\end{proof}
		
		\comment{
			Let $(\cdot, \cdot) : W \times W \rightarrow K$ be a $K$-bilinear non-degenerate form. For a $K$-linear space $V \subseteq W$ let $V^{\perp} = \{w \in W \mid \forall v \in V: (w,v) = 0 \}$ be the orthogonal space. Let $f: W \rightarrow \mathbb{C}$ be a function. Fourier transformation $\mathcal{F}$ of $f$ is defined as $\mathcal{F}(f)(s) = \sum_{w \in W} f(w) \chi_s(w)$, for $s \in W$, where $\chi_s(w) = \xi^{\tr_{K/\mathbb{F}_p}(sw)}$, $s,w \in K$. The inverse Fourier transformation is given by the following formula: $\mathcal{F}^{-1}(f)(s) = \frac{1}{\card{W}} \sum_{w \in W} f(w) \overline{\chi_s(w)}$.
			
			\begin{proposition}
				The $K$-linear map $f: C \rightarrow L^m$ is an isometry if and only if
				\begin{equation}\label{eq-main-dual}
				\sum_{i = 1}^{m} \id_{V_i^{\perp}} = \sum_{i = 1}^{m} \id_{U_i^{\perp}}\;.
				\end{equation}
			\end{proposition}
			\begin{proof}
				Evidently, $\chi_w(v) = 1$ if and only if $(w,v) = 0$, for $w,v \in W$. Then $\mathcal{F}(\id_V)(u) = \sum_{w \in W} \id_V(w) \chi_w(u) = \sum_{w \in V} \chi_w(u) = \card{V} \id_{V^{\perp}}(u)$. Thus $\mathcal{F}\left( \sum_{i= 1}^m \frac{1}{\card{V_i}} \id_{V_i} \right) = \sum_{i= 1}^m \id_{V_i^{\perp}}$. This means that any solution of \cref{eq-main-counting-space} is a solutions of the \cref{eq-main-dual}. Taking the inverse Fourier transform on both parts of \cref{eq-main-dual}, we get the statement in the other direction. In such a way, using \Cref{thm-isometry-criterium}, we prove the proposition.
			\end{proof}
		}
\footnotesize

\end{document}